\setlist[description]{leftmargin=0cm,  labelindent=\parindent}
\newtheoremstyle{special-example}
  {}
  {}
  {}
  {\parindent}
  {\bfseries}
  {:}
  { }
  {}
  \theoremstyle{special-example}
\newtheorem{example}[equation]{Example}
\renewcommand{\tilde}{\widetilde}
\newcommand{\OO}{\mathcal O}
\newcommand{\wt}{\widetilde}
\newcommand{\inv}{^{-1}}
\newcommand{\inff}{_{\infty}}
\newcommand{\margincom}[1]{\marginpar{\scriptsize\textsl{ #1}}}
\newcommand{\I}{\mathrm{i}}
\newcommand{\Q}{\mathbb Q}
\newcommand{\F}{\mathbb F}
\newcommand{\pp}{\mathbb P}
\newcommand{\cQ}{\mathcal Q}
\DeclareMathOperator{\pic}{Pic}
\newcommand{\fie}{\varphi}
\numberwithin{equation}{section}
\newcommand{\sr}[1]{\textsf{\scriptsize\color{Plum} S: {#1}}}
\newcommand{\mf}[1]{\textsf{\scriptsize\color{Red} M: {#1}}}
\newcommand{\rp}[1]{\textsf{\scriptsize\color{Orange} R: {#1}}}
\newcommand{\mfr}[1]{\margincom{\mf{#1}}}
\newcommand{\jr}[1]{\textsf{\scriptsize\color{Green} J: {#1}}}
\newtheoremstyle{Lehn-it}
  {}
  {}
  {\itshape}
  {}
  {\bfseries}
  {$\;$\textmd{---}}
  { }
  {}
\newtheoremstyle{Lehn-up}
  {}
  {}
  {\upshape}
  {}
  {\bfseries}
  {$\;$\textmd{---}}
  { }
  {}
  \newtheoremstyle{up-list}
  {}
  {}
  {\upshape}
  {}
  {\bfseries}
  { }
  { }
  {}
\newtheoremstyle{Lehn-Bemerkung}
  {}
  {}
  {}
  {}
  {\itshape}
  {$\;$\textmd{---}}
  { }
  {}
\newtheoremstyle{citing}
  {}
  {}
  {\itshape}
  {}
  {\bfseries}
  {$\;$\textmd{---}}
  {.5em}
  {\thmnote{#3}}
\numberwithin{equation}{section}
\theoremstyle{Lehn-it}
\newtheorem{thm}[equation]{Theorem}
\newtheorem{lem}[equation]{Lemma}
\newtheorem{prop}[equation]{Proposition}
\newtheorem{cor}[equation]{Corollary}
\theoremstyle{Lehn-up}
\theoremstyle{Lehn-Bemerkung}
\newtheorem{rem}[equation]{Remark}
\theoremstyle{up-list}
\theoremstyle{citing}
\newcommand{\onto}{\twoheadrightarrow}
\newcommand{\into}{\hookrightarrow}
\newcommand{\ol}{\overline}
\newcommand{\sm}{_{\rm sm}}
\newcommand{\gm}{_{\rm Gmin}}
\DeclareFontFamily{OT1}{rsfs}{}
\DeclareFontShape{OT1}{rsfs}{n}{it}{<-> rsfs10}{}
\DeclareMathAlphabet{\curly}{OT1}{rsfs}{n}{it}
\DeclareMathOperator{\Aut}{Aut}
\DeclareMathOperator{\rk}{rk}
\newcommand{\isom}{\cong}
\renewcommand{\epsilon}{\varepsilon}
\renewcommand{\phi}{\varphi}
\renewcommand{\theta}{\vartheta}
\newcommand{\kb}{{\mathcal B}}
\newcommand{\ki}{{\mathcal I}}
\newcommand{\ko}{{\mathcal O}}
\newcommand{\kq}{{\mathcal Q}}
\newcommand{\kx}{{\mathcal X}}
\newcommand{\IA}{{\mathbb A}}
\newcommand{\IC}{{\mathbb C}}
\newcommand{\IF}{{\mathbb F}}
\newcommand{\IN}{{\mathbb N}}
\newcommand{\IP}{{\mathbb P}}
\newcommand{\IQ}{{\mathbb Q}}
\newcommand{\gothM}{{\mathfrak M}}
\title{  I-surfaces with one T-singularity}
\author{Marco Franciosi}
\address{Marco Franciosi\\Dipartimento di Matematica\\Universit\`a di Pisa \\Largo B. Pontecorvo 5\\I-56127  Pisa\\Italy}
\email{marco.franciosi@unipi.it}
\author{Rita Pardini}
\address{Rita Pardini\\Dipartimento di Matematica\\Universit\`a di Pisa \\Largo B. Pontecorvo 5\\I-56127  Pisa\\Italy}
\email{rita.pardini@unipi.it}
\author{Julie Rana}
\address{Julie Rana\\Department of Mathematics, Lawrence University, 711 E. Boldt Way, Appleton WI 54911, USA.}
\email{julie.f.rana@lawrence.edu}
\author{S\"onke Rollenske}
\address{S\"onke Rollenske\\FB 12/Mathematik und Informatik\\
Philipps-Universit\"at Marburg\\
Hans-Meerwein-Str. 6\\
35032 Marburg\\
Germany}
\email{rollenske@mathematik.uni-marburg.de}
\dedicatory{This article is dedicated  to Fabrizio Catanese,\\ who showed  the beauty of algebraic surfaces to us and many others.}
\begin{document}
\begin{abstract}
 We classify normal stable surfaces with $K_X^2 = 1$, $p_g = 2$ and $q=0$ with a unique singular point which is a non-canonical T-singularity,  thus exhibiting two divisors in the main component and a new irreducible component of the moduli space of stable surfaces $\overline\gothM_{1,3}$. 
\end{abstract}
\subjclass[2020]{14J10, 14J17, 14J29}
\keywords{stable surface, T-singularity, I-surface, KSBA moduli space}

\maketitle
\setcounter{tocdepth}{1}
\tableofcontents
\section{Introduction} 

The investigation of (minimal) surfaces of general type with low invariants and their moduli spaces  started with the work of Castelnuovo and Enriques (cf. \cite{ Enriques-book})
 at the beginning of the 20th century and has remained an active topic ever since. It is fair to say, that Fabrizio Catanese, to whom the present special  issue is dedicated, was one the most influential contributors to this topic over the last decades both through his work, e.g., 
\cite{MR555688,MR963062,MR1689180,MR2400886,MR2964466,MR3184164,MR4176836},  and by passing on his enthusiasm to his students and collaborators. 
Nowadays Gieseker's moduli space of canonical models $\gothM_{K^2, \chi}$ \cite{gieseker77} is known to admit a modular compactification $\overline \gothM_{K^2, \chi}$, the KSBA moduli space of stable surfaces, see Section \ref{sec: stable-surf}.

In this article, we continue the investigation of (the moduli space of) stable I-surfaces\footnote{The name was coined by Green, Griffiths, Laza, and Robles in their investigation of Hodge-theoretic stratifications of the moduli space.} begun in \cite{FPR17}. These are stable surfaces with $K_X^2 = 1$, $p_g = 2$, and $q=0$. The Gieseker moduli space $\gothM_{1,3}\subset \overline\gothM_{1,3}$ is an irreducible and rational variety of dimension $28$ parametrising double covers of the quadric cone $\cQ_2\subset \IP^3$ branched over a quintic section and the vertex, a fact that was attributed to Kodaira in  \cite[\S 3]{Horikawa2} and extended to the moduli space $\overline\gothM_{1,3}^{(Gor)}$  of Gorenstein stable surface in \cite{FPR17}.

Another important class of singularities on stable surfaces are T-singularities, which are exactly the quotient singularities that can occur in  stable surfaces in the closure of the main component (see Section \ref{sec: T-sing}). 
Here we study what we call T-singular I-surfaces, that is, stable I-surfaces with a unique singular point which is a non-canonical  T-singularity.  Our main result is:

\begin{thm}\label{thm: oneT}
 Let $X$ be an I-surface with unique singular point a non-canonical  T-singularity. Then only the following cases can occur:
 \begin{center}
   \begin{tabular}{cll}
  \toprule
Cartier index &   T-singularity& Construction \\
  \midrule
   $2$ &  $ \frac{1}{4d}(1,2d-1)$ $(d\leq 32)$ & Example \ref{ex: n=2}\\
   $3$ &  $ \frac{1}{18}(1,5)$&  Example \ref{ex: (b)}\\
   $5$ &  $\frac{1}{25}(1, 14)$ & Example \ref{ex: (c)}\\
  \bottomrule
 \end{tabular}
 \end{center}
 \end{thm}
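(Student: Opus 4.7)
The strategy is to pass to the minimal resolution $\pi\colon \tilde X \to X$ of the unique T-singularity and convert the hypotheses on $X$ into strong numerical constraints on the smooth surface $\tilde X$. Since T-singularities are rational, $\tilde X$ retains the invariants $p_g(\tilde X) = 2$, $q(\tilde X) = 0$, $\chi(\tilde X) = 3$. The $\pi$-exceptional set is a Hirzebruch--Jung chain of smooth rational curves $E_1, \ldots, E_\ell$ whose self-intersections are read off from the continued-fraction expansion of $dn^2/(dna-1)$. Writing $\pi^* K_X = K_{\tilde X} + \Delta$ with $\Delta = \sum a_i E_i$ and solving $(K_{\tilde X} + \Delta)\cdot E_j = 0$ determines the $a_i \in \tfrac{1}{n}\Z$; combined with $\pi^*K_X \cdot E_j = 0$ this yields
\[
K_{\tilde X}^2 \;=\; K_X^2 + \Delta^2 \;=\; 1 + \Delta^2.
\]
A direct calculation shows that $\Delta^2$ becomes more negative as the Cartier index $n$ grows: $\Delta^2 = -1$ when $n = 2$ (so $K_{\tilde X}^2 = 0$), $\Delta^2 = -2$ when $n = 3$ (so $K_{\tilde X}^2 = -1$), and $\Delta^2 \leq -3$ for $n \geq 4$ (so $K_{\tilde X}^2 \leq -2$).

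Next, let $\mu\colon \tilde X \to Y$ be a minimal model. Because $p_g(Y) = 2$ and $q(Y) = 0$, the Enriques--Kodaira classification forces $Y$ to be either minimal of general type with $K_Y^2 \geq 1$, or properly elliptic with $K_Y^2 = 0$ (in which case the canonical bundle formula pins down the fibration $Y \to \mathbb P^1$ and its multiple fibres). The relation $K_Y^2 \geq K_{\tilde X}^2 = 1 + \Delta^2$, together with the exact count $K_Y^2 - K_{\tilde X}^2$ of $(-1)$-curves contracted by $\mu$, controls how the rational chain $E_1, \ldots, E_\ell$ can sit inside $\tilde X$: its image in $Y$ must be a rational configuration compatible with the fibration or general-type structure, and can meet the $\mu$-exceptional locus in only a few specified ways.

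The proof concludes with a case-by-case analysis on the Cartier index $n$. For $n = 2$ we have $K_{\tilde X}^2 = 0$, so $Y$ is properly elliptic and the T-chain has shape $(-3, -2, \ldots, -2, -3)$ of length $d$ (or a single $(-4)$-curve for $d=1$). Controlling the possible configurations of such chains in the blow-up of a properly elliptic surface with $p_g = 2$, $q = 0$, $K^2 = 0$ gives the bound $d \leq 32$, and the construction of Example~\ref{ex: n=2} realises every value in this range. For $n = 3$ and $n = 5$ the combined numerical and geometric restrictions isolate the T-singularities $\tfrac{1}{18}(1, 5)$ and $\tfrac{1}{25}(1, 14)$, constructed in Examples~\ref{ex: (b)} and~\ref{ex: (c)}. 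The remaining indices $n = 4$ and $n \geq 6$ are excluded by showing that no corresponding Hirzebruch--Jung chain fits inside $\tilde X$ compatibly with the possible minimal models. The main difficulty, and the heart of the argument, is precisely this geometric matching: the numerical constraints alone leave several additional T-singularities \emph{a priori} possible, and their exclusion requires careful use of the explicit shape of the chain against the detailed structure of the relevant elliptic and general-type surfaces.
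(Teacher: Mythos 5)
Your skeleton coincides with the paper's: resolve the singularity, use the discrepancies to get $K^2_{\tilde Y}=K_X^2+\Delta^2$ (which for a T-singularity of type $\frac{1}{dn^2}(1,dna-1)$ with exceptional chain of length $r$ equals $d-r$), pass to the minimal model $Y$, show $Y$ is properly elliptic with $p_g=2$ and $q=0$, and match the chain against the elliptic fibration. But the step that makes the classification \emph{finite} is missing, and it is not a formality. The only numerical constraint you extract from the minimal model is $K^2_Y\ge K^2_{\tilde Y}$, which bounds $\Delta^2$ from \emph{above} ($\Delta^2\le -1$); nothing in your argument bounds $\Delta^2$, equivalently $r-d$, from \emph{below}. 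A priori $\tilde Y$ could be a blow-up of $Y$ at arbitrarily many points carrying a T-chain with $r-d$ arbitrarily large, so for every index $n\ge 4$ there are infinitely many candidate singularities, and the claim that ``no corresponding Hirzebruch--Jung chain fits'' would have to be proved uniformly over this unbounded family. That is precisely the content of the Rana--Urz\'ua bound $r-d\le 2$ for T-singularities on stable surfaces with $K^2=1$ \cite[Theorem 1.1]{rana-urzua17}, which the paper imports, together with their Theorem 3.2 resolving the boundary case $r-d=2$ (this is what eliminates $n=4$ and every index-$5$ singularity except $\frac{1}{25}(1,14)$). Without these inputs, or a proof of them, your case analysis never terminates.

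Two further points. First, you leave open the alternative that $Y$ is minimal of general type with $K_Y^2\ge 1$ and never return to exclude it; the paper does so via $K_Y^2<K_X^2=1$ \cite[Prop.~2.3]{rana-urzua17}, valid because $Y$ is not rational, and this is needed before the elliptic-fibration analysis can even begin. Second, for $n=2$ one has $K^2_{\tilde Y}=0=K^2_Y$, so $\tilde Y=Y$ is already minimal and there is no blow-up to control; moreover the bound $d\le 32$ does not come from chain combinatorics on the elliptic surface. It is obtained only after identifying $X$ as a double cover of the quadric cone branched in a quintic section with a single $A_{d-2}$ point (Proposition~\ref{prop: index 2 classification}) and then bounding the genus drop of the branch curve, $\lfloor (d-1)/2\rfloor\le p_a(D)=15$ (Proposition~\ref{prop: D}). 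Your sketch asserts the bound but supplies no mechanism for it.
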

 
 If all deformations are unobstructed, then the dimension of the $\IQ$-Gorenstein deformation space of the T-singularity gives the codimension of the corresponding stratum in the moduli space. 
 This is especially interesting for Wahl singularities, which are expected to give rise to divisors. This expectation is only partially met (see  Proposition \ref{prop: D}) in our cases.
 For T-singular I-surfaces we obtain the following
\begin{cor}\label{cor: moduli}
  The T-singular I-surfaces of type $\frac 14(1,1)$ and $\frac {1} {18}(1,5)$ form divisors in the main component of $\overline \gothM_{1,3}$, that is, the closure of the Gieseker moduli space. The T-singular surfaces  of type $\frac 1{25}(1,14)$ form an open subset of another irreducible component of $\overline \gothM_{1,3}$ of dimension $28$.
 \end{cor}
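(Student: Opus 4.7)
The plan is to combine Theorem \ref{thm: oneT} with the standard $\IQ$-Gorenstein deformation theory for T-singularities. For a stable surface $X$ with one T-singularity $P$ there is an exact sequence
$$0 \to H^1(T_X) \to \mathrm{Def}^{qG}(X) \to T^1_{qG,P} \xrightarrow{\mathrm{ob}} H^2(T_X),$$
where $T^1_{qG,P}$ is the local smoothing space of $P$ (of dimension equal to the smoothing index $d$, which is $1$ for a Wahl singularity). The first term parametrises equisingular deformations, and $\ker(\mathrm{ob})$ records those partial smoothings of $P$ that extend to the whole of $X$. The goal in each of the three cases of Theorem \ref{thm: oneT} is to determine the dimension of $\mathrm{Def}^{qG}(X)$ and to decide whether a global smoothing to a smooth I-surface exists.

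First, for each family I would show unobstructedness by checking $H^2(T^{qG}_X) = 0$ and compute $h^1(T_X)$ from the explicit construction of $X$ in Examples \ref{ex: n=2}, \ref{ex: (b)} and \ref{ex: (c)}; this yields $\dim \mathrm{Def}^{qG}(X) = h^1(T_X) + \dim \ker(\mathrm{ob})$ and hence the local dimension of the stratum of $\overline\gothM_{1,3}$ containing $[X]$.

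Next I would identify which smoothings globalise. For $\frac{1}{4}(1,1)$ (Wahl, $d=1$), Example \ref{ex: n=2} provides an explicit one-parameter smoothing whose generic fibre is a smooth canonical I-surface, so $[X]$ lies in the closure of the Gieseker locus and the equisingular stratum, of dimension $27$, is a divisor in the main component. For $\frac{1}{18}(1,5)$ (non-Wahl, $d=2$), Example \ref{ex: (b)} shows that one smoothing direction in $T^1_{qG,P}$ extends to a global smoothing landing in the smooth locus while the other direction stays within the boundary, again yielding a codimension-one locus of the main component. For $\frac{1}{25}(1,14)$ (Wahl, $d=1$), the key claim is that $\mathrm{ob}\neq 0$, so no global smoothing to a smooth I-surface exists; Example \ref{ex: (c)} exhibits an irreducible $28$-dimensional family of such surfaces, which therefore form an open subset of a new irreducible component of $\overline\gothM_{1,3}$ of the same dimension as the main one.

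The main obstacle is ruling out a global smoothing in the Wahl case $\frac{1}{25}(1,14)$. The cleanest method is to compute $\mathrm{ob}$ directly on the model in Example \ref{ex: (c)}, but a more tractable route is a dimension count: once one verifies that the family of Example \ref{ex: (c)} is already $28$-dimensional and consists entirely of equisingular deformations, $h^1(T_X)$ alone attains the maximum possible value $28$, which combined with unobstructedness forces $\mathrm{ob}\neq 0$ and ensures that the corresponding irreducible component of $\overline\gothM_{1,3}$ is genuinely new.
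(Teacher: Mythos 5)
Your overall strategy --- reduce to the three cases of Theorem \ref{thm: oneT}, count moduli for each construction, and decide smoothability --- is the right one, and your closing dimension count for type $\frac{1}{25}(1,14)$ (a $28$-dimensional equisingular family cannot sit inside the closure of the $28$-dimensional irreducible Gieseker locus) is exactly the argument of Remark \ref{rem: moduli-n=5}. But two steps of your plan have genuine gaps. First, your proposed verification of unobstructedness ``by checking $H^2(T^{qG}_X)=0$'' cannot work uniformly: for the surfaces of type $\frac{1}{25}(1,14)$ the obstruction space to $\IQ$-Gorenstein deformations is known to be non-zero (the computation of \cite{rana-urzua17} recalled in Remark \ref{rem: moduli-n=5}), so you cannot derive openness of the new component from smoothness of the deformation space, and your final sentence (``combined with unobstructedness forces $\mathrm{ob}\neq 0$'') leans on a hypothesis you cannot establish. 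The paper obtains openness without any unobstructedness statement: by \cite[Prop.~2.3]{Hacking-Prokhorov10} every non-locally-trivial deformation of the singularity $\frac{1}{25}(1,14)$ has canonical singularities, so a non-equisingular deformation of $X$ would exhibit $X$ as smoothable; non-smoothability of \emph{every} such $X$ (not merely the general one, which is all the dimension count yields) is then established by the involution criterion of Corollary \ref{cor: smoothable-invo} applied to the explicit geometry of Example \ref{ex: (c)} (Proposition \ref{prop: 5 I_1 not smoothable}). You need one of these inputs to upgrade ``the general member is not smoothable'' to ``the locus is open in $\overline\gothM_{1,3}$''.

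Second, the assertion that ``Example \ref{ex: (b)} shows that one smoothing direction in $T^1_{qG,P}$ extends to a global smoothing'' is unsupported: Example \ref{ex: (b)} only constructs the surface. Smoothability of the $\frac{1}{18}(1,5)$ surfaces is the most delicate point of the corollary; in the paper it requires computing the canonical ring (Proposition \ref{prop: can ring index 3}), realizing $X$ as a $(3,10)$ complete intersection in $\IP(1,1,2,3,5)$, and writing down an explicit $\IQ$-Gorenstein one-parameter degeneration from a degree-$10$ hypersurface in $\IP(1,1,2,5)$ (Corollary \ref{cor: index 3 smoothable}). Without this, or an equivalent lifting argument, the claim that these surfaces lie in the main component is unproven. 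For type $\frac{1}{4}(1,1)$ your argument is sound and matches the paper: the smoothing comes from deforming the branch quintic on the quadric cone (Remark \ref{rem: bic-n=2}, Corollary \ref{cor: n=2-smoothable}), and the $27$-parameter count is Proposition \ref{prop: D}(ii); the analogous counts of $27$ and $28$ moduli for the other two families are Remarks \ref{rem: moduli-n=3} and \ref{rem: moduli-n=5}.
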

Schematically one might depict the situation as follows, where we include a conjectural connection between the two components:

\begin{center}
 \begin{tikzpicture}[thick]
 \draw[Plum] (0,2) to node [left] {type $\frac 14(1,1)$}  (0,0);
 \draw[blue] (0,0) to node [below left] {type $\frac {1}{ 18}(1,5)$ } (3,-1);
  \draw (0,2) to (3,1);
  \draw[red, dashed] (3,1) to (3,-1) node[below right, text width = 4cm] {
  {\small conjecturally:\\ type $\frac 1 {25}(1,14)$, cuspidal}}; 
  \node at (1.5, 0.5) {$\gothM_{1,3}$};
  \draw (3,1) -- (6,2) -- (6,0) -- (3,-1);
  \node at (4.5, 0.5) {type $\frac 1 {25}(1,14)$};

 \end{tikzpicture}

\end{center}

\subsection*{Acknowledgements}
M.F., R.P. and S.R. would like to thank Fabrizio Catanese, who shaped their view on mathematics in general and algebraic surfaces in particular. 
We are indebted to Valery Alexeev for explaining Proposition \ref{prop: aut-ext} to us. 
The first and second author are  partially supported by the project PRIN 
 2017SSNZAW$\_$004 ``Moduli Theory and Birational Classification"  of Italian MIUR. 
 The first and second author  are members of GNSAGA of INDAM.

\hfill\break
{\bf Notation and conventions:} We work over the complex numbers. $\sim$ denotes linear equivalence.  For $n$  a positive integer, a  $(-n)$-curve $C$ on a smooth surface $Y$ is a smooth rational curve with $C^2=-n$.
 For a Hirzebruch surface $\F_n$,  we denote by $\sigma\inff$ the infinity section and by $\Gamma$ the class of a ruling, so that  a section $\sigma_0$ disjoint from $\sigma\inff$ is linearly equivalent to
  $n\Gamma +\sigma\inff$; we denote by $\kq_n$ the cone in $\pp^{n+1}$ over the rational normal curve of degree $n$ in $\pp^n$, which is the image of $\IF_n$ via the map given by $|\sigma_0|$.
\section{Preliminaries }\label{sec: normal surfaces}

\subsection{Normal stable surfaces}\label{sec: stable-surf}
Stable surfaces were first defined to give a geometric compactification of the moduli space of surfaces of general type (see \cite{ksb88, kollar12, kovacs15} and references therein).
In the construction of the moduli space, one of the main insights was that one cannot allow all flat families of stable surfaces, but only so-called $\IQ$-Gorenstein deformations, that is, flat families $\pi \colon \kx\to B$ of stable surfaces with fixed invariants such that for all $m$ the reflexive powers $\omega_{\kx}^{[m]}$ are flat over $B$ and commute with base change. 

Here we will  consider normal stable surfaces only. 
Recall that a  normal  surface $X$ is called  \emph{stable} if  it has  log-canonical singularities, and  $K_X$ is $\Q$-Cartier and ample.  The smallest  $m>0$ such that $mK_X$ is Cartier is called the \emph{(Cartier) index}  of $X$. 

For a normal stable surface $X$, 
we let $f\colon \wt Y\to X$ be the minimal desingularization and    $\eta\colon \wt Y\to Y$ be the morphism to a minimal model:   \[\begin{tikzcd}
{} & \wt Y  \arrow{dr}{f}\arrow{dl}[swap]{\eta}\\ Y && X  
 \end{tikzcd}
\]
\begin{rem}\label{rem: pg constant}
If $X$ has only rational singularities (for instance cyclic quotient singularities),  the Leray spectral sequence gives   
 $q(Y)=q(\wt Y)=q(X)$ and $p_g(Y)=p_g(\wt Y)=p_g(X)$. 
  The normal stable surfaces  considered in this paper have rational singularities and $p_g(X)=2$, so the Kodaira dimension of $\tilde Y$ is  $\ge 1$ and the minimal model $Y$ is unique. 
\end{rem}

Let $X$ be a normal  stable  surface  with  log-terminal singularities.  It is possible to generalize the plurigenus formula for smooth minimal surfaces  as follows.
 Let $\tilde Y \to X$ be the minimal desingularization and write 
\[f^*K_X=K_{\tilde Y}+\Delta = K_{\tilde Y} + \sum_i a_i E_i\]
where $-a_i$ is the log discrepancy of the exceptional curve $E_i$.
Then by Prop.~5.2 and Thm.~5.3 in  \cite{bla95a}, one has:
\begin{equation}\label{eq: plurigenus-lt}
h^0(mK_X)=\chi(\OO_X)+\frac{m(m-1)}{2}K^2_X+\frac 12\{m\Delta\}\left(\{m\Delta\}-\{\Delta\}\right),
\end{equation}
where $\{D\}$, as customary,  denotes the fractional part of a $\Q$-divisor $D$.

\subsection{T-singularities and T-singular surfaces}\label{sec: T-sing}

A T-singularity is either a rational double point or a   quotient 2-dimensional singularity of type $\frac{1}{dn^2}(1, dna-1)$, where $n>1$ and  $d,a >0$ are integers with $a$ and $n$ are coprime.
These are precisely the quotient singularities that admit a  $\Q$-Gorenstein smoothing, that is, that can occur on smoothable stable surfaces (cf.  \cite[\S~3]{ksb88}).

The exceptional divisor of the minimal  resolution of  a T-singularity $\frac{1}{dn^2}(1, dna-1)$ is a so-called \emph{T-string}, a string of rational curves $A_1, A_2, \ldots , A_r$ with self-intersec\-tions
 $-b_1,-b_2, \ldots, -b_r$ given by the Hirzebruch-Jung continued fraction expansion $[b_1,b_2,\ldots, b_r]$ of $\frac{dn^2}{dna-1}$ (see, e.g.,~\cite[Chapter 10]{cls2012}). 
 Following popular convention, we will refer to the expansion $[b_1,b_2,\ldots, b_r]$ corresponding to a T-singularity as a T-string. 
 
The index $2$ T-singularity with $d=1$ has T-string $[4]$. Those of index $2$ and $d>1$ have T-string $[3,2,\ldots, 2, 3]$, where $2$ occurs $d-2$ times. 
 It is immediate to check that all the log discrepancies are equal to $-\frac 12$.
More generally, if $[b_1,\ldots, b_r]$ is the T-string of a T-singularity $\frac{1}{dn^2}(1, dna-1)$ for some $n>2$, then $b_1=2$ (or $b_r=2$) and $[b_2,\ldots, b_r-1]$ 
(respectively, $[b_1-1,\ldots, b_{r-1}]$) is also the T-string of a T-singularity of type $\frac{1}{dn'^2}(1, dn'a-1)$ for some $n'< n$.
In particular, we obtain all possible T-strings of T-singularities of fixed $d$ by beginning with the corresponding T-string of index $2$ listed above and iterating as described~\cite[\S~3]{ksb88}. 
The T-singularities of index 3 are obtained by a single iteration.

A \emph{T-singular surface of type  $\frac{1}{dn^2}(1, dna-1)$} is a normal surface with a singular point of  type  $\frac{1}{dn^2}(1, dna-1)$ and  smooth elsewhere. The index of $X$ is equal to $n$. Using the notation of  \S \ref{sec: stable-surf}  we have (\cite[Prop. 20]{lee98}): 
\begin{equation}\label{eq: lee formula}
K^2_{\tilde Y}=K^2_X-(r-d+1).
\end{equation}

If, in addition, $Y$ is not a rational surface, then by  \cite[Prop. 2.3]{rana-urzua17}: 
\begin{equation}\label{eq: K2}
K^2_Y<K^2_X
\end{equation}

\begin{rem}\label{rem: T-surf}
Let $\tilde Y$ be a smooth surface containing a T-string.   Then by \cite[Prop.~4.10]{Kollar-Mori} there is a map $f\colon \tilde Y\to X$ that contracts the T-string to a T-singularity;  the surface $X$ is projective and $\Q$-factorial, so it is a T-singular surface.  If the class of  $f^*K_X$ is nef and big and the only curves with zero intersection with it are the components of the T-string, then $X$ is also stable. 
\end{rem}

\subsection{I-surfaces}\label{ssection: I-surf} 

An \emph{ I-surface}  $X$  is a stable surface with   $K_X^2=1$, $p_g(X)=2$ and $q(X)=0$. 
In \cite{FPR17}  it was shown  that the classical description of smooth surfaces of general type with
$K_X^2=1$ and $\chi(X)=3$  extends to  the Gorenstein case, i.e.: 

\begin{itemize}
\item  a Gorenstein I-surface $X$  is canonically embedded as a hypersurface of degree $10$ in (the smooth locus of) $\IP(1,1,2,5)$;
 
 \item the moduli space 
$\overline\gothM_{1,3}^{(Gor)}$ of Gorenstein stable surfaces with $K^2=1$ and $\chi=3$ is irreducible and rational of dimension $28$.
\item  for a Gorenstein I-surface $X$, the bicanonical map is a degree 2 morphism $\phi_2\colon X\to \kq_2 \subset \IP^3$, where $\kq_2$  is  the quadric cone, branched on the vertex $O$   and on a quintic section $D$ of $\kq_2$ not containing $O$
\item  conversely, if $D$ is a quintic section of $\kq_2$ not containing $O$ and $(\kq_2, \frac 12 D)$ is a log-canonical pair, then the double cover of $\kq_2$ branched on $D$ and  $O$ is a stable Gorenstein I-surface.   
\end{itemize}
\subsection{ Extending automorphisms of stable surfaces}

Let $X$ be a stable surface and let $g\colon \mathcal X\to B $  be a $1$-parameter $\IQ$-Gorenstein smoothing of $X$; denote by $0\in B$ the point such that $g\inv(0)=X$,  write $B^*:=B\setminus\{0\}$ and $\mathcal X^*:=\mathcal X_{| B^*}$. 

 Let   $\Aut(\mathcal X/B)$  be the relative automorphism scheme. The following result is certainly well known to experts; we thank  V.\ Alexeev for explaining it to us. Note as a starting point, that the automorphim group of a stable surface is finite by \cite[Thm.~11.12]{Iitaka82}, or more generally by \cite{HMX13}.

\begin{prop}\label{prop: aut-ext} Let $\sigma$ be a section of $\Aut(\mathcal X^*/B^*)$. 
Then, up to a finite base change,  $\sigma$ extends to a section of $\Aut(\mathcal X/B)$.
\end{prop}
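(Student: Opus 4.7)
The plan is to show that $\Aut(\mathcal{X}/B)\to B$ is a \emph{finite} morphism; once this is in hand, the section $\sigma$ extends after a finite base change by a standard application of the valuative criterion.

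First I would upgrade the relative automorphism functor to that of a polarised family. Choose $m$ divisible by the Cartier index of every fiber, so that $\omega_{\mathcal{X}/B}^{[m]}$ is a line bundle, flat over $B$, commuting with base change, and relatively ample; these are exactly the defining features of a $\IQ$-Gorenstein family as recalled in Section~\ref{sec: stable-surf}. Every $B$-automorphism of $\mathcal{X}$ preserves the relative dualising sheaf and hence this polarisation, so $\Aut(\mathcal{X}/B)$ coincides with the automorphism scheme of the polarised pair $(\mathcal{X},\omega_{\mathcal{X}/B}^{[m]})$. The latter is representable as a locally closed subscheme of a Hilbert scheme of graphs in $\mathcal{X}\times_B \mathcal{X}$, hence is of finite type over $B$.

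Next, finiteness of $\Aut(\mathcal{X}/B)\to B$ would follow from quasi-finiteness plus properness. Quasi-finiteness is immediate because the fibers $\Aut(\mathcal{X}_b)$ are finite by \cite{Iitaka82, HMX13} as cited before the statement. For properness I would view a section of $\Aut(\mathcal{X}/B)$ over a test base $T$ as an isomorphism between two $\IQ$-Gorenstein families of polarised stable surfaces over $T$, and invoke separatedness of the KSBA moduli stack of stable surfaces, which guarantees that such isomorphisms satisfy the valuative criterion of properness. Combining quasi-finiteness and properness gives that $\Aut(\mathcal{X}/B)\to B$ is finite.

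Finally, let $Z\subseteq \Aut(\mathcal{X}/B)$ be the schematic closure of the image of $\sigma\colon B^*\to\Aut(\mathcal{X}^*/B^*)$. Then $Z$ is closed in a finite $B$-scheme, hence itself finite over $B$, and the projection $Z\to B$ is an isomorphism over $B^*$. Taking the normalisation of $B$ in the function field of the component of $Z$ containing the image of the generic point of $B^*$ produces the finite base change $B'\to B$ over which $\sigma$ extends to a global section of $\Aut(\mathcal{X}_{B'}/B')$. The main obstacle is establishing properness in the middle paragraph: this is precisely where the stability and $\IQ$-Gorenstein hypotheses are essential, controlling how a one-parameter family of fiberwise automorphisms can degenerate at $0\in B$; without them one could only control degenerations up to birational modifications of the total space.
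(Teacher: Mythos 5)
Your proof is correct, but it reaches the conclusion by a genuinely different route than the paper. The paper argues directly with canonical models: after a base change admitting semi-stable resolutions, the family $\mathcal X$ and any extension $\mathcal X'$ of $\mathcal X^*$ through which $\sigma$ factors have the same relative canonical model, namely $\mathcal X$ itself, so $\sigma$ extends to a regular self-map $\bar\sigma$ of $\mathcal X$ which is an automorphism because some power of it is the identity. You instead package everything into finiteness of $\Aut(\mathcal X/B)\to B$: quasi-finiteness is exactly what the paper later quotes from \cite{Alexeev96} in the proof of Corollary \ref{cor: smoothable-invo}, and properness is the valuative criterion for the $\operatorname{Isom}$-scheme, i.e., separatedness of the KSBA moduli stack. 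This is a legitimate black box (and your reduction to a polarised automorphism functor via $\omega^{[m]}_{\mathcal X/B}$ is the right way to get representability), but note that separatedness is itself proved in \cite{ksb88} and its successors by precisely the uniqueness-of-relative-canonical-models argument the paper writes out, so your proof is shorter mainly because the hard step is outsourced. One small improvement you could make: since the closure $Z$ of the graph of $\sigma$ is finite over the normal one-dimensional base $B$ and birational onto it, it is already a section, so in your approach no base change is needed at all; in the paper the base change is only there to guarantee the existence of semi-stable resolutions.
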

\begin{proof}
The claim  follows   from the fact that the  family $\mathcal X $  is the canonical model of any extension of $\mathcal X ^*$, and the canonical model is unique.
Indeed, choose an  extension  $\mathcal X' $ of $\mathcal X ^*$ such that $\sigma$ induces a morphism $\mathcal X '\to \mathcal X $; up to a base change we may assume that both $\kx$ and $\kx'$ admit a semi-stable resolution.  Now taking  canonical models of  both  $\mathcal X '$ and $\mathcal X $ one gets a regular map $\bar \sigma\colon \mathcal X \to \mathcal X $ that restricts to $\sigma$ on $\mathcal X ^*$. Since $\sigma^m$ is the identity for some $m$, we have that  $\bar \sigma^m$ is also the identity,  and therefore $\bar \sigma$ is an automorphism.

%
\end{proof}

As a result, we obtain the following necessary condition for smoothability of I-surfaces:
\begin{cor}\label{cor: smoothable-invo} Let $X$ be  a stable  I-surface. If $X$ has a $\IQ$-Gorenstein smoothing  then it admits an involution.
\end{cor}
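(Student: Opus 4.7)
The plan is to produce the involution on $X$ by extending the bicanonical covering involutions of the smooth nearby fibers, using Proposition~\ref{prop: aut-ext} as the main tool.

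Let $g\colon \mathcal X \to B$ be a $\IQ$-Gorenstein smoothing with $g^{-1}(0) = X$, so that the fibers over $B^*$ are smooth I-surfaces. By \S\ref{ssection: I-surf} each such fiber $X_t$ carries the bicanonical involution $\sigma_t$, i.e.\ the covering involution of the degree-$2$ bicanonical map $X_t \to \kq_2$. These glue into a section $\sigma \in \Aut(\mathcal X^*/B^*)$: since $g$ is $\IQ$-Gorenstein, the sheaf $g_*\omega^{[2]}_{\mathcal X/B}$ is locally free of rank $4$ on $B^*$ and compatible with base change, so the relative bicanonical map $\mathcal X^* \to \IP(({g_*\omega^{[2]}_{\mathcal X/B}})^\vee)|_{B^*}$ is a well-defined degree-$2$ morphism, and $\sigma$ is its covering involution.

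By Proposition~\ref{prop: aut-ext}, after a finite base change $\sigma$ extends to a section $\bar\sigma \in \Aut(\mathcal X/B)$. Since $\bar\sigma^2$ restricts to the identity on the dense open $\mathcal X^*$, one has $\bar\sigma^2 = \id_{\mathcal X}$, and hence $\tau := \bar\sigma|_X$ is an automorphism of $X$ with $\tau^2 = \id_X$.

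To rule out $\tau = \id_X$, I would linearize $\bar\sigma$ at a smooth point $p$ of $X$ (which exists since $X$ has isolated singularities, so $\mathcal X$ is smooth at $p$). The differential $d\bar\sigma_p$ is an order-$2$ linear endomorphism of $T_p\mathcal X$ preserving $T_pX$; if $\tau = \id_X$ it acts as the identity on $T_pX$, and, because $\bar\sigma$ is a $B$-morphism, also as the identity on the quotient $T_p\mathcal X/T_pX \cong T_0B$. In characteristic $0$ an order-$2$ linear endomorphism whose eigenvalues are all $1$ is forced to be the identity, so $d\bar\sigma_p = \id$. By the standard linearization of finite group actions at smooth fixed points (Cartan--Bochner), $\bar\sigma$ is then the identity on an \'etale neighborhood of $p$, hence on the irreducible total space $\mathcal X$, contradicting $\bar\sigma|_{\mathcal X^*} = \sigma \neq \id$.

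The main obstacle is the extension step, which is precisely the content of Proposition~\ref{prop: aut-ext}; once $\bar\sigma$ is available, the remaining work (checking $\bar\sigma^2 = \id_{\mathcal X}$ and ruling out $\tau = \id_X$ via linearization at a smooth point) is routine.
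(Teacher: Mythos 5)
Your proof is correct and follows the same route as the paper for the two main steps: the bicanonical involutions of the smooth fibres (from \S\ref{ssection: I-surf}) assemble into a section of $\Aut(\mathcal X^*/B^*)$, which extends over $B=0$ after a finite base change by Proposition~\ref{prop: aut-ext}. The only genuine divergence is in the last step, showing that the limit $\tau=\bar\sigma|_X$ is not the identity: the paper simply invokes Alexeev's theorem that $\Aut(\mathcal X/B)\to B$ is quasi-finite and \emph{\'etale}, so a section that is a nontrivial involution on $B^*$ remains one on the central fibre, whereas you give a self-contained argument by linearizing $\bar\sigma$ at a smooth fixed point $p\in X$ (using that $\bar\sigma$ is a $B$-morphism to see $d\bar\sigma_p$ is unipotent, hence trivial for a finite-order map in characteristic $0$, and then the identity principle on the irreducible total space). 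Your argument is a correct, more elementary substitute --- it is in fact essentially the proof that the automorphism scheme of a family of stable surfaces is unramified over the base, i.e.\ a special case of the cited result --- while the paper's citation is shorter and avoids having to discuss the local structure of $\mathcal X$ at $p$. Your extra paragraph justifying why the fibrewise involutions form an algebraic section over $B^*$ (via the relative bicanonical map and base change for $g_*\omega^{[2]}_{\mathcal X/B}$) is a detail the paper leaves implicit; it is welcome and correct.
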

\begin{proof}  Let $\mathcal X\to B$ be a $1$-parameter $\Q$-Gorenstein smoothing of $X$. By  \cite[Prop. 3.6]{FPR17} (see \S \ref{ssection: I-surf})  the bicanonical map of a Gorenstein I-surface is of degree 2,
 hence  the corresponding involution defines a section of  $\Aut(\mathcal X^*/B^*)$, that, possibly up to a base change,  extends to a section $\sigma$ of  $\Aut(\mathcal X/B)$ by  
Proposition  \ref{prop: aut-ext}.

 The map $\Aut(\mathcal X/B)\to B$ is quasi-finite and \'etale (cf.  \cite[Thm.~3.29]{Alexeev96}),
 so the restriction of $\sigma$ to the central fibre $X$  has indeed order 2. \end{proof}

\section{The examples}
\subsection{The case of index $n=2$}
We start by describing a construction  of  stable T-singular I-surfaces  of index 2. 
\begin{example}\label{ex: n=2}
Let  $\cQ_2\subset \pp^3$ be the quadric cone and let  $\epsilon\colon \F_2\to\cQ_2$ be  the minimal resolution;  as usual, we denote by $\sigma\inff$ the infinity section of $\F_2$, by $\Gamma$ the class of a ruling and write $\sigma_0=\sigma_{\infty}+2\Gamma$. Let $D\subset \F_2$ be an effective divisor linearly equivalent to $4\sigma_0+2\Gamma$ and assume  that $D$ does not contain $\sigma\inff$ and is smooth away from $\sigma\inff$, so that $D$ is either smooth or has a double point $P$  on $\sigma\inff$. 

We let $\pi\colon Y\to  \F_2$ be the double cover branched on $D$.  The surface   $Y$ is smooth when $D$ is,  and    has a singular point $Q$  of type $A_k$ lying over  $P\in \sigma\inff$ otherwise. The linear system $\pi^*|\Gamma|$   is a pencil of  elliptic curves and coincides with the canonical system of $Y$.  We denote by $ \tilde Y\to Y$ the minimal desingularization;  since the singularities of $Y$ are canonical,  $\tilde Y$ is minimal elliptic with $p_g(\tilde Y)=2$ and again the canonical system coincides with the elliptic pencil. 
 There are the following possibilities for the preimage $C$ of $\sigma\inff$ in $\tilde Y$:
 \begin{enumerate}
 \item[(1)] if $D$ meets $\sigma\inff$ at two distinct points, then $\tilde Y=Y$ and $C$ is a $(-4)$-curve;
 \item[(2)] if $D$ is smooth but meets $\sigma\inff$ at only one point, then $\tilde Y=Y$ and  $C$ is a string of type $[3,3]$;
 \item[(3)] if $D$ has a double point $P\in \sigma\inff$ then the point $Q\in Y$ lying over $P$ is an $A_k$ point for some $k>0$. The preimage of  $\sigma\inff$  in $Y$ splits as $C_1+C_2$, with $C_1$ and $C_2$ smooth rational curves meeting at $Q$,  and  $C$   is a string of type $[3,2,\dots,2,  3]$ with 2 occurring $k$ times.
 \end{enumerate}
 
  Let $\nu\colon \tilde Y\to X$ be the first step in the Stein factorization of the map $\tilde Y\to Y \to \cQ_2$: 
   $\nu$ contracts $C$ to a point $R$ lying over the vertex of $\cQ_2$  and is an isomorphism elsewhere.
    So $R$ is a singularity of type $\frac{1}{4d}(1, 2d-1)$ (see \S \ref{sec: T-sing}):  
    in  case (1) above one has  $d=1$, in  case (2) one has   $d=2$, and in   case (3)  one has  $d=k+2>2$. 
    So we have $C^2=-4$, $K_{\tilde Y}C=2$ and  $\nu^*K_X=K_{\tilde Y}+\frac 12 C$  (see \S \ref{sec: T-sing}) and therefore
     $K_X^2=(K_{\tilde Y}+\frac 12 C)^2=1$. Finally, it is easy to check that $\nu^*K_X$ is nef and that the only irreducible curves
      $A$  with $\nu^*K_XA=0$ are the components of $C$, and  therefore  $K_X$ is ample. 
      Finally, $p_g(X)=p_g(\tilde Y)=2$ by Remark \ref{rem: pg constant}, since T-singularities are rational. 
      Summing up, $X$ is a T-singular I-surface of type $\frac{1}{4d}(2d-1)$, where $d=1$ in case (1), $d=2$  in case (2),  and $d=k+2>2$ in case (3).
\end{example}

\begin{rem}\label{rem: bic-n=2}
Let $X$ be an I-surface as in Example \ref{ex: n=2}.  Then the  induced map $\epsilon\colon X\to \cQ_2$ is a finite double cover, flat away from the vertex of $\cQ_2$,
 with branch locus $\bar D= \epsilon(D)$ cut out on $\cQ_2$ by a quintic hypersurface passing through the vertex of $\cQ_2$. 
 By the Hurwitz formula we have $2K_X=\epsilon^*H$, where $H$ is the class of a hyperplane section of $\cQ_2\subset \pp^3$. 
 Since the canonical system $|2K_X|$ is 3-dimensional by  \eqref{eq: plurigenus-lt}, it coincides with $\epsilon^*|H|$. So the  map $\epsilon\colon X\to \cQ_2$ is the bicanonical map of $X$. In particular,  the branch divisor $D$ is determined by $X$ up to automorphisms of $\cQ_2$.

 In fact, our conditions on $D$ guarantee that $(\cQ_2, \frac 12 \bar D)$ is a log-terminal pair and thus $X$ is an I-surface by \cite[Prop.~4.1]{FPR17}. Deforming $\bar D$ to a general quintic section of $\cQ_2$ gives a smoothing of $X$ as a hypersurface of degree $10$ inside $\IP(1,1,2,5)$.

We record this fact for later reference.
  \end{rem}
\begin{cor}\label{cor: n=2-smoothable}
The I-surfaces constructed in Example \ref{ex: n=2} are smoothable.
\end{cor}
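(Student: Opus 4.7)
The plan is to turn Remark~\ref{rem: bic-n=2} into a formal $\IQ$-Gorenstein smoothing argument. First I would use that remark to realize $X$ as a weighted hypersurface in $\IP(1,1,2,5)$: the bicanonical map $\epsilon\colon X\to \cQ_2\cong \IP(1,1,2)$ is a double cover branched along the vertex $O$ and a quintic section $\bar D$ through $O$, so $X$ is cut out as
\[
X \;=\; \{z^2 - f_0(x_0,x_1,y) = 0\} \;\subset\; \IP(1,1,2,5),
\]
where $(x_0,x_1,y,z)$ carry weights $(1,1,2,5)$ and $f_0$ is a weighted polynomial of degree $10$ with $f_0(0,0,1)=0$ (encoding that $\bar D$ passes through the vertex of $\cQ_2$).

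Next I would deform $f_0$ inside the space of weighted polynomials of degree $10$. Choose a generic $f_\infty$ of weighted degree $10$ with $f_\infty(0,0,1)\neq 0$ such that $\{f_\infty=0\}$ cuts a smooth quintic section of $\cQ_2$, and set $f_t := f_0 + t f_\infty$ for $t$ in a disc $B\ni 0$. Consider
\[
\kx \;:=\; \{z^2 - f_t(x_0,x_1,y) = 0\}\;\subset\; \IP(1,1,2,5)\times B.
\]
Then $\kx_0 = X$, and for $t\neq 0$ small the branch divisor $\{f_t=0\}$ avoids the vertex of $\cQ_2$ and is a smooth quintic section, so the pair $(\cQ_2, \tfrac 12 \{f_t=0\})$ is log-terminal and \cite[Prop.~4.1]{FPR17} identifies $\kx_t$ with a Gorenstein I-surface, smooth for generic $f_\infty$ by a Bertini-type argument.

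The final step is to check that $\kx\to B$ is $\IQ$-Gorenstein. This is where the weighted-hypersurface presentation pays off: since $\kx$ is a family of Cartier divisors of fixed weighted degree in the $\IQ$-Gorenstein ambient $\IP(1,1,2,5)\times B$, adjunction gives $\omega_{\kx/B}\cong \OO(1)|_{\kx}$, which is $\IQ$-Cartier; and because $\OO(2)$ is already a genuine line bundle on $\IP(1,1,2,5)$, the reflexive power $\omega_{\kx/B}^{[2]}$ is an honest line bundle, hence flat over $B$ and compatible with base change, yielding the $\IQ$-Gorenstein property. The point requiring the most care is verifying smoothness of $\kx_t$ at its intersections with the singular locus of $\IP(1,1,2,5)$: only the singular point $(0,0,1,0)$ of type $\tfrac12(1,1,1)$ is relevant (the point $(0,0,0,1)$ is trivially avoided by the equation $z^2 = f_t$), and the condition $f_\infty(0,0,1)\neq 0$ forces $\kx_t$ to avoid it for $t\neq 0$. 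Thus the smoothing resolves exactly the non-Gorenstein T-singularity of $X$ that was forced on the central fibre by $f_0(0,0,1)=0$.
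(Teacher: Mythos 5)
Your argument is correct and is essentially the paper's own: Remark \ref{rem: bic-n=2} identifies $X$ with a degree-$10$ hypersurface $\{z^2=f_0\}$ in $\IP(1,1,2,5)$ whose branch quintic passes through the vertex of $\cQ_2$, and the smoothing is obtained by deforming the branch divisor to a general quintic section, exactly as you do. The only minor imprecision is that $\OO(2)$ is a line bundle on $\IP(1,1,2,5)$ only away from the index-$5$ point $(0{:}0{:}0{:}1)$; since every fibre of your family avoids that point, the conclusion about $\omega^{[2]}_{\kx/B}$ stands.
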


To construct surfaces as in Example \ref{ex: n=2}  one needs to find a branch divisor $D$ with the required singularity at a point $P\in\sigma\inff$ and smooth everywhere else. 
This is a non trivial question, as shown  by the following partial  answer to it.  
\begin{prop} \label{prop: D}
Consider T-singular I-surfaces of type $\frac {1}{4d} (1,2d-1)$ as constructed in Example \ref{ex: n=2}. Then
\begin{enumerate}
\item[(i)] we  have  $d\le 32$;
\item [(ii)] for $d=1, 2, 3$ the T-singular I-surfaces of type $\frac {1}{4d} (1,2d-1)$ obtained  as in Example  \ref{ex: n=2} give an irreducible  subvariety of codimension $d$  inside the main component of the moduli space of stable I-surfaces. In particular, for $d=1$ one has a divisor. 
\item[(iii)]   There are irreducible families of  T-singular I-surfaces of type  $\frac{1}{4d}(1, 2d-1)$  depending on $\mu$ moduli for the following values of $(d,\mu)$:
$$ (9,19),\quad (21,7),\quad  (25,4).$$ 
\end{enumerate}
\end{prop}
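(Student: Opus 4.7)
The proof revolves around the linear system $|L|:=|4\sigma_0+2\Gamma|=|4\sigma_\infty+10\Gamma|$ on $\F_2$: adjunction (with $K_{\F_2}=-2\sigma_\infty-4\Gamma$) gives
\[
2p_a(D)-2=(4\sigma_\infty+10\Gamma)(2\sigma_\infty+6\Gamma)=-16+24+20=28,
\]
so $p_a(D)=15$ for $D\in|L|$, and one has $\dim|L|=34$, $\dim\Aut(\F_2)=7$, consistent with $\dim\gothM_{1,3}=28$ (the image of $|L|$ in $\overline\gothM_{1,3}$ being the divisor of I-surfaces whose branch quintic on $\cQ_2$ passes through the vertex).

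\emph{For (i):} In Example \ref{ex: n=2}, when $d=k+2\ge 3$, the divisor $D$ carries a single $A_k$ singularity at $P$, whose $\delta$-invariant is $\lceil k/2\rceil$. Reducedness of $D$ (needed for $Y$ to be normal) and the nonnegativity of the geometric genus of the normalization force $\lceil k/2\rceil\le p_a(D)=15$, i.e.\ $k\le 30$ and $d\le 32$. (For $d=1,2$ the curve $D$ is smooth and the bound is trivially satisfied.)

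\emph{For (ii):} I would parametrize the $D$'s with the required singularity and quotient by $\Aut(\F_2)$. For $d=1$ there is no condition, so the moduli dimension is $34-7=27=28-1$. For $d=2$ the tangency $D|_{\sigma_\infty}=2P$ with $P\in\sigma_\infty\cong\pp^1$ is codimension $1$ in $|L|$ (since $L|_{\sigma_\infty}=\OO_{\pp^1}(2)$ and the tangency at a fixed $P$ is codimension $2$), giving moduli dimension $26=28-2$. For $d=3$ an $A_1$ at $P\in\sigma_\infty$ is codimension $3$ at fixed $P$, hence codimension $2$ in $|L|$ after letting $P$ vary, giving moduli dimension $25=28-3$. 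Irreducibility follows from the projectivity of $|L|$ and the irreducibility of the incidence variety of pointed singularities over $\sigma_\infty\cong\pp^1$.

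\emph{For (iii):} The values $(9,19)$ and $(21,7)$ match the naive formula ``codim $k+2$ at fixed $P$ in $|L|$, plus $\pp^1$ of positions, modulo $\Aut(\F_2)$'', yielding $\mu=28-d$; I would confirm existence of $D$ with the prescribed $A_7$ (resp.\ $A_{19}$) at $P\in\sigma_\infty$ by writing down an explicit divisor in local coordinates at $P$ and checking the Jacobian of the condition map has the expected rank. The case $(25,4)$ is the main obstacle, since the naive count gives $\mu=28-25=3$, one short of the claim. The plan is to show either that for this specific $|L|$ the $23$rd-jet condition at $P\in\sigma_\infty$ drops rank by exactly one (so one $A_{23}$-condition becomes dependent on the others, raising the dimension of the parameter space by one), or that there is an additional irreducible component of the parameter space coming from reducible divisors $D=D_1+D_2$ with a $12$-fold contact at $P$ (producing an $A_{23}$ singularity with two smooth branches); either analysis reduces to a careful local computation in coordinates at $P$ and is where the main work of the proof lies.
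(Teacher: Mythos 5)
Your part (ii) and the irreducible half of (i) essentially coincide with the paper's argument: the same computation $p_a(D)=15$, the bound $\delta(A_{d-2})=\lfloor (d-1)/2\rfloor\le 15$, and the same parameter count $34-(d-1)-7=28-d$ modulo the $7$-dimensional $\Aut(\F_2)$. However, there are two genuine gaps. First, in (i) your genus argument only applies when $D$ is irreducible: if $D=D_1+D_2$ is reducible, the normalization is disconnected, so $p_a(\tilde D)\ge -1$ rather than $\ge 0$, and the same computation only yields $\delta\le 16$, i.e.\ $d\le 33$. The paper closes this case by classifying the reducible branch divisors directly: since $D\cdot\sigma_\infty=2$ and every component must meet $\sigma_\infty$ (otherwise two components would meet off $\sigma_\infty$, violating smoothness there), $D$ has exactly two smooth components $D_i\sim a_i\sigma_0+\Gamma$ with $a_1+a_2=4$, forcing $m=D_1D_2\in\{4,10,12\}$ and hence $d=2m+1\in\{9,21,25\}$.

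Second, and more seriously, your plan for (iii) is not a proof and is not the paper's construction. The three families $(9,19)$, $(21,7)$, $(25,4)$ are exactly the three reducible cases just listed, and the moduli are counted on the splittings: for $d=9$ one fixes a ruling $D_1=\Gamma_0$ and shows the $D_2\in|4\sigma_0+\Gamma|$ with $D_2|_{\Gamma_0}=4P$ form a $\IP^{25}$, giving $25+1-7=19$; for $d=21$ one gets $13+9-7=7$ via the surjection $H^0(3\sigma_0+\Gamma)\onto H^0(\ko_{D_1}(10P))$; for $d=25$ one takes $D_1\in|2\sigma_0+\Gamma|$ a genus-$2$ curve meeting $\sigma_\infty$ at a Weierstrass point $P$ (so that $D_1|_{D_1}=2P+5K_{D_1}=12P$; this is a $3$-parameter family by a separate lemma) and then a pencil of curves $D_2$ cutting out $12P$ on $D_1$, giving $3+1=4$. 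Your proposed route for $(9,19)$ and $(21,7)$ --- an \emph{irreducible} $D$ with an $A_7$, resp.\ $A_{19}$, singularity at a point of $\sigma_\infty$, to be verified by an explicit example and a Jacobian rank computation --- rests on an existence claim you have not established, and for $\delta$ as large as $10$ the existence of such irreducible curves, smooth away from $P$ and not containing $\sigma_\infty$, is precisely the hard point; the numerical match $\mu=28-d$ for these two values is a coincidence of the reducible families, not evidence for your construction. For $(25,4)$ you do name the correct mechanism (reducible $D$ with $12$-fold contact at $P$) as one of two alternatives, but the excess dimension is not a rank drop of jet conditions on irreducible curves: it is the dimension of the reducible family itself, and computing it requires the Weierstrass-point analysis above, which is missing from your proposal.
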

\begin{proof} We use freely the notation of Example  \ref{ex: n=2}.
\medskip

(i) Let $X$ be as in Example  \ref{ex: n=2} with a singularity of type  $\frac {1}{4d} (1,2d-1)$, with $d>2$. Then  the branch locus $D\subset \F_2$ of the corresponding double cover has exactly a double point $P\in\sigma_{\infty}$ of type $A_{d-2}$ and is smooth elsewhere. 

Assume first that  $D$ is irreducible and let $\tilde D\to D$ be the normalization. Since $D\sim 4\sigma_0+2\Gamma$, we have $p_a(D)=15$ and $p_a(\tilde D)=p_a(D)-\lfloor \frac{d-1}{2}\rfloor$, and therefore  $d\le 32$. 

Assume now that $D=D_1+D_2$ is reducible. If $D_1$ does not intersect $\sigma_\infty$, then $D_1$ and $D_2$ have to intersect away from $\sigma_\infty$, contradicting our assumption that $D$ is smooth away from the section at infinity. Thus $D_1$ and $D_2$ are smooth divisors  not containing $\sigma\inff$ meeting only at the point $P\in \sigma\inff$. 
Write $m=D_1D_2$:  the singular point $P$ is of type $A_{2m-1}$, so we have $d=2m+1$. Up to exchanging  $D_1$ and $D_2$,   there are exactly the following possibilities:
\begin{itemize}
\item[(R1)] $D_1\sim \Gamma$, $D_2\sim 4\sigma_0+\Gamma$, $m=4$, $d=9$;
\item[(R2)] $D_1\sim \sigma_0+ \Gamma$, $D_2\sim 3\sigma_0+\Gamma$, $m=10$, $d=21$;
\item[(R3)] $D_1\sim 2\sigma_0+ \Gamma$, $D_2\sim 2\sigma_0+\Gamma$, $m=12$, $d=25$.
\end{itemize}
\medskip

(ii) The surface $X$ is determined by the choice of $D\in |4\sigma_0+2\Gamma|$ up to the action of the automorphism group of $\F_2$ (equivalently, of $\cQ_2$), which has dimension 7.   The case $d=1$ corresponds to a general choice of $D$, so the number of moduli is $\dim |4\sigma_0+2\Gamma|-7=27$. 
The case $d=2$  correspond to $D$ smooth  but tangent to $\sigma\inff$ at a point and the case $d=3$ corresponds to $D$ with an ordinary double point on $\sigma\inff$. In both cases simple arguments based on Bertini's theorem show that $D$ can be chosen  in an irreducible and  locally closed subset of codimension $d-1$ of  $ |4\sigma_0+2\Gamma|$.
\medskip

(iii) The three families correspond to the cases (R1), (R2) and (R3) above. 

We discuss   case (R3) first.  
Let $D_1\in|2\sigma_0+\Gamma|$ a smooth curve such that the point $P:=D\cap \sigma\inff$ is a Weierstrass point of $D_1$. By Lemma \ref{lem: g=2} there is a $3$-dimensional family of such curves, up to the action of the automorphisms of $\F_2$. 

Consider  the following exact sequence:
\[ 0\to \OO_{\F_2}\to \OO_{\F_2}(D_1)\to \OO_{D_1}(12P)\to 0.\]
Passing to cohomology, we have a surjection $H^0 (\OO_{\F_{2}}(D_1))\onto    H^0(\OO_{D_1}(12P))$; so there is a curve $D'_1\in|2\sigma_0+\Gamma|$  that meets $D_1$ only at $P$. If we take a general element $D_2$ of the pencil spanned by $D_1$ and $D'_1$ we obtain an example  of case (R3). The curve $D_1$ depends on three moduli up to automorphisms of $\F_2$ and for each choice of $D_1$ we have a $1$-dimensional family of possible $D_2$, so case (R3) gives  an irreducible subvariety of dimension 4 of the main component of the  moduli space of I-surfaces.

Consider now case (R2). Because   $h^1(\IF_2,2\sigma_0)=0$, we have a short exact sequence:
\[0\to H^0(2\sigma_0)\to H^0(3\sigma_0+\Gamma)\to H^0(\OO_{D_1}(10P))\to 0.\]
So the curves in $|3\sigma_0+\Gamma|$ that cut out the divisor $10P$ on $D_1$ are a linear subsystem $|M|$ of dimension $9=h^0(2\sigma_0)$; $P$ is the only base point of $|M|$  and for general $R\in|2\sigma_0|$  the curve  $D_1+R$ is smooth at $P$. So by Bertini's Theorem, we can pick $D_2$ in a non empty open subset of $|M|$, and we in total  have $h^0(\sigma_0+\Gamma)-1+9 =14$ parameters for the construction. Taking into account the action of the automorphism group of $\F_2$, which is $7$-dimensional,  we see that we have  7 moduli.

Case (R1) can be analysed in the same way: one gets $25+1=26$ parameters for the construction and therefore 19 moduli. 
 \end{proof}

\begin{rem}
As explained in the introduction of \cite{rana-urzua17}, the log Bomolov-Miyaoka-Yau inequality gives $d\le 34$ for a stable I-surface with a T-singularity of type $\frac {1}{4d}(1,2d-1)$, a weaker bound than Proposition \ref{prop: D} (i).
\end{rem}
\begin{rem}
Proposition \ref{prop: D} shows that the expectation that T-singular surfaces of type $\frac{1}{4d}(1,2d-1)$  give a codimension $d$ subset in the moduli space
(see \S \ref{sec: T-sing}) is true for $d=1,2, 3$ but not for all possible $d$. In fact, for $d=25$ one has a family depending on 4 moduli while   the expected number is $28-25=3$.  \end{rem}

\begin{lem}\label{lem: g=2} Let $C$ be a smooth genus 2 curve and let $P\in C$ be a Weierstrass point. Then  there is an embedding $j\colon C\to\F_2$ such that
  $j(C)\sim 2\sigma_0+\Gamma$  and $j(C)$ intersects $\sigma\inff$ at $j(P)$. 
\end{lem}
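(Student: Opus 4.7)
The plan is to construct $j$ as a morphism into the projectivization of a suitable direct-image bundle on $\IP^1$. Since $P$ is a Weierstrass point, $|K_C|=|2P|$ defines the hyperelliptic double cover $\pi\colon C\to\IP^1$ and $\pi^{-1}(\pi(P))=2P$. The desired conditions $j(C)\sim 2\sigma_0+\Gamma$ and $j(C)\cdot\sigma\inff=1$, together with $\sigma\inff\sim\sigma_0-2\Gamma$, force $j^*\cO_{\F_2}(\sigma_0)\cong\cO_C(5P)$, so I take as guiding line bundle $\cL:=\cO_C(5P)$; note $\cL$ is very ample, being of degree $5=2g+1$.

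The main calculation is
\[
\pi_*\cL\cong\cO_{\IP^1}(2)\oplus\cO_{\IP^1},\qquad\text{whence}\qquad\IP(\pi_*\cL)\cong\F_2.
\]
Rank $2$ and total degree $2$ follow from $h^0(\cL)=4$ and $h^1(\cL)=0$. The splitting type is pinned down by the action of the hyperelliptic involution $\iota$: as $\iota(P)=P$, the bundle $\cL$ is $\iota$-equivariant, the $\iota$-invariant sections are precisely the pullbacks from $H^0(\IP^1,\cO(2))$ (three-dimensional), and the remaining one-dimensional anti-invariant piece spans the trivial summand. Concretely, in a Weierstrass model $y^2=p(x)$ with $\deg p=5$ and $P=\infty$, a basis of $H^0(\cL)$ is $\{1,x,x^2,y\}$ with $1,x,x^2$ $\iota$-invariant and $y$ anti-invariant.

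Global generation of $\cL$ makes the counit $\pi^*\pi_*\cL\twoheadrightarrow\cL$ surjective (it factors the global evaluation $H^0(\cL)\otimes\cO_C\onto\cL$), so by the universal property of the projective bundle one obtains a morphism $j\colon C\to\F_2$ over $\IP^1$ with $j^*\cO_{\F_2}(\sigma_0)\cong\cL$. Composing with the contraction $\epsilon\colon\F_2\to\cQ_2\subset\IP^3$ given by $|\sigma_0|$ recovers the morphism associated to the complete linear system $|5P|$, which is a closed embedding; since $\epsilon$ is an isomorphism off $\sigma\inff$, $j$ will itself be a closed embedding provided $j(C)\cap\sigma\inff$ is a single point. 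The intersection numbers
\[
j(C)\cdot\Gamma=\deg(\pi\circ j)=2,\qquad j(C)\cdot\sigma\inff=\deg j^*\cO(\sigma_0-2\Gamma)=5-4=1
\]
give both the class $j(C)\sim 2\sigma_0+\Gamma$ and that $j(C)\cap\sigma\inff$ is one reduced point. That point is $j(P)$, because $P$ is the unique point of $C$ sent by $|5P|$ to the vertex of $\cQ_2$ (in the coordinates above, only $y$ has maximal pole order at $P$) and $\epsilon^{-1}(\text{vertex})=\sigma\inff$. The delicate step is the splitting of $\pi_*\cL$; this is precisely where the Weierstrass hypothesis on $P$ enters, through the action of~$\iota$.
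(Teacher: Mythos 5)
Your argument is correct, but it takes a genuinely different route from the paper's. The paper's proof is purely geometric: the hyperelliptic double cover embeds $C$ into the total space of $\cO_{\IP^1}(-3)$, i.e.\ into $\F_3\setminus\sigma\inff$ as a bisection of class $2\sigma_0$ disjoint from $\sigma\inff$, and then the elementary transformation centred at $P$ (blow up $P$, contract the strict transform of the ruling through $P$) lands in $\F_2$; the tangency of that ruling with $C$ at the Weierstrass point is what makes the transformed curve meet the new $\sigma\inff$ in the single point $j(P)$, and the class $2\sigma_0+\Gamma$ falls out of the intersection bookkeeping. You instead build $\F_2$ intrinsically as $\IP(\pi_*\cO_C(5P))$, pinning down the splitting $\cO(2)\oplus\cO$ via the eigensheaf decomposition under the hyperelliptic involution --- this is where the Weierstrass hypothesis enters, through $\cO_C(4P)=\pi^*\cO_{\IP^1}(2)$ and the parity of pole orders of invariant sections at a ramification point --- and obtain $j$ from the relative evaluation map. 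One small streamlining: since $\epsilon\circ j=\phi_{|5P|}$ is a closed immersion (as $\deg 5P=2g+1$), $j$ is automatically a closed immersion, being injective and unramified because its composition with $\epsilon$ is; so the caveat about $j(C)\cap\sigma\inff$ being a single point is not needed for that step, though the pole-order computation is still what identifies the unique intersection point as $j(P)$. As for what each approach buys: the paper's is shorter and makes the birational geometry transparent, while yours identifies $\epsilon\circ j$ with the quintic embedding $\phi_{|5P|}$ and exhibits the eigenspace decomposition of $H^0(5P)$ explicitly, in the same spirit as the relative double-cover analysis of Lemma \ref{lem: elliptic-3}.
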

\begin{proof}
The canonical double cover  $C\to\pp^1$ gives a natural embedding of $C$ in the total space $V$  of the line bundle $\OO_{\pp^1}(-3)$. In turn, there is an open immersion $V\hookrightarrow\F_3$ 
 that identifies $V$ with  the complement of the infinity section $\sigma\inff$. Composing these two inclusions one gets an  inclusion of  $C$ in $\F_3$ as a bisection disjoint from $\sigma\inff$. 
 Blowing up $\F_3$ at the Weierstrass point $P$ and contracting the strict transform of the ruling of $\F_3$ containing $P$,  one obtains the desired inclusion $j\colon C\to \F_2$. 
\end{proof}
\subsection{The case of index $n=3$}
Here we construct T-singular I-surfaces of type $\frac{1}{18}(1,5)$. 
We start by proving an auxiliary   result on elliptic surfaces. 
\begin{lem}\label{lem: elliptic-3} Let $Y$ be a minimal elliptic surface with $p_g(Y)=2$ and $q(Y)=0$. If $Y$ contains a $(-3)$-curve $B$ then: 
\begin{enumerate}
\item   $Y$ is the minimal resolution of a double cover $\pi\colon \bar Y\to \F_6$ branched on a divisor $D\in |\sigma\inff+3\sigma_0|$ with at most  negligible singularities and $\sigma_{\infty} $ pulls back to $2B$ on $Y$;
\item  $Y$ has no multiple fibers;  the reducible fibers of the elliptic fibration $Y\to \pp^1$ are the preimages of the rulings of $\F_6$ containing a singular point of $D$.
\end{enumerate}
Conversely, the minimal resolution $Y$ of a double cover $\bar Y\to \F_6$ as in (i) is a minimal elliptic surface with $p_g(Y)=2$, $q(Y)=0$ and the pull-back of $\sigma_{\infty}$ to $Y$ is equal to $2B$ for   a  $(-3)$-curve $B$. 
\end{lem}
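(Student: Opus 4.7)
The plan is to exploit the section-like nature of the $(-3)$-curve $B$ to realize $Y$ as a double cover of $\F_6$, and then read off the branch divisor from the Hurwitz formula.

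First I would use adjunction on $B$ to get $K_Y \cdot B = 1$. The canonical bundle formula for a minimal elliptic fibration $f\colon Y \to \pp^1$ with $q=0$, $p_g=2$ gives $K_Y \sim F + \sum (m_i-1) F_i$, with $F$ a general fiber and $m_i F_i$ the multiple fibers. Since $B$ cannot lie in a fiber (else $K_Y \cdot B = 0$), setting $r := B \cdot F > 0$ and using that each $B \cdot F_i = r/m_i$ is a positive integer, the identity $1 = K_Y \cdot B = r\bigl(1 + \sum (1 - 1/m_i)\bigr)$ forces $r = 1$ and the absence of multiple fibers. This already establishes the first part of (ii) and identifies $B$ as a section of $f$.

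Next I would compute the rank two sheaf $\cE := f_*\OO_Y(2B)$ on $\pp^1$. From the short exact sequences $0 \to \OO_Y(kB) \to \OO_Y((k+1)B) \to \OO_B((k+1)B|_B) \to 0$ for $k = 0, 1$, together with the vanishing on $B \cong \pp^1$ of $H^0$ of line bundles of negative degree, one gets $f_*\OO_Y(B) = \OO_{\pp^1}$ and, using that $R^1 f_*\OO_Y(B)$ vanishes (it is the $H^1$ of a degree one line bundle on a possibly singular but arithmetic genus one fiber), a short exact sequence $0 \to \OO_{\pp^1} \to \cE \to \OO_{\pp^1}(-6) \to 0$. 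Since $\Ext^1(\OO_{\pp^1}(-6), \OO_{\pp^1}) = 0$ this splits, and the relative linear system $|2B|$ defines a degree two morphism $\pi\colon Y \to \pp(\cE) = \F_6$, restricting on each smooth fiber $Y_t$ to the hyperelliptic $g^1_2$ determined by $|2(B \cap Y_t)|$.

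To identify the branch divisor $D$: since $B$ is a ramification curve of $\pi$, its image is a section $\bar B$ with $\pi^*\bar B = 2B$, and from $2 \bar B^2 = (2B)^2 = -12$ one gets $\bar B = \sigma_\infty$. Using $K_Y \sim F$ and $K_{\F_6} \sim -2\sigma_\infty - 8F$, the Hurwitz formula gives $R = K_Y - \pi^*K_{\F_6} = 4B + 9F$, hence $\pi^*D = 2R = 8B + 18F$ and $D \sim 4\sigma_\infty + 18F \sim \sigma_\infty + 3\sigma_0$. Smoothness of $Y$ forces the singularities of $D$ to be negligible, since worse singularities of the branch would produce non-canonical singularities in the double cover which persist in any crepant resolution. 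The second part of (ii) follows, as a ruling $F_t \subset \F_6$ yields a smooth elliptic fiber of $Y$ precisely when $F_t \cap D$ consists of four distinct transverse points, and reducible/nodal otherwise, which corresponds exactly to $F_t$ meeting a singular point of $D$. The converse is a direct application of the double cover formulas: $K_{\F_6} + \tfrac{1}{2}D \sim F$ yields $K_Y \sim \pi^*F$, so $K_Y^2 = 0$, $p_g(Y) = h^0(F) = 2$, $q(Y) = 0$, and $B := \tfrac{1}{2}\pi^*\sigma_\infty$ is a $(-3)$-curve.

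The main technical subtlety is the vanishing of $R^1 f_*\OO_Y(B)$ in the presence of singular (nodal or cuspidal) fibers; once that is verified via cohomology and base change on reduced arithmetic genus one curves, the remainder of the argument is intersection-theoretic bookkeeping.
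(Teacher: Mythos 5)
Your argument for the main construction is correct but follows a genuinely different route from the paper. Where you compute the rank-two bundle $\cE=f_*\OO_Y(2B)$ fibrewise, split the extension $0\to\OO_{\pp^1}\to\cE\to\OO_{\pp^1}(-6)\to 0$, and use the relative degree-two map $Y\to\pp(\cE)=\F_6$, the paper instead takes the global linear system $|L|=|2B+7F|$, uses Kawamata--Viehweg vanishing to get $h^0(L)=10$ and base-point-freeness, maps $Y$ two-to-one onto a degree-$8$ surface in $\pp^9$, and invokes the classification of surfaces of minimal degree to identify the image with $\F_6$ (the image of $B$ being a line meeting every ruling, which rules out the cone). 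Your approach is more intrinsic and avoids the minimal-degree classification, at the cost of the fibrewise cohomological checks you flag: note that ``no multiple fibres'' does not mean all fibres are reduced (types $I_b^*$, $II^*$, etc.\ have non-reduced components), so the vanishing of $R^1f_*\OO_Y(B)$ is cleanest via relative duality, $R^1f_*\OO_Y(B)\cong\bigl(f_*(\omega_{Y/\pp^1}(-B))\bigr)^\vee=0$ since $\OO_Y(-B)$ has negative degree on the generic fibre. Both proofs determine the branch class the same way up to bookkeeping (the paper fixes the unknown coefficient from $p_g(\bar Y)=2$, you from $K_Y\sim F$), and both identify the negligible singularities of $D$ from the fact that $Y\to\bar Y$ contracts only $K_Y$-trivial curves.

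There is, however, a genuine flaw in your justification of the second half of (ii). You assert that a ruling fails to give a smooth fibre exactly when it meets a singular point of $D$, and that the fibre is then ``reducible/nodal''. This conflates two distinct degenerations: a ruling can be tangent to the \emph{smooth} locus of $D$, in which case the fibre of $Y$ is irreducible of type $I_1$ (or $II$), and it does not pass through any singular point of $D$. So your dichotomy does not isolate the \emph{reducible} fibres, which is what the statement requires. The correct argument is the one in the paper: since $\sigma_\infty$ and $D_1$ are disjoint, the restriction of $D$ to any ruling contains the point $\sigma_\infty\cap\Gamma$ with multiplicity one and hence is never divisible by two, so the preimage of every ruling in $\bar Y$ is irreducible; consequently a fibre of $Y$ is reducible precisely when the minimal resolution $Y\to\bar Y$ contributes extra components over it, i.e.\ precisely when the ruling passes through a singular point of $D$.
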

\begin{proof}
(i)+(ii)  Denote by $F$ a general fiber of the elliptic fibration $Y\to\pp^1$. By the canonical bundle formula for elliptic surfaces we have $|K_Y|=|aF| +\sum (m_i-1)F_i$, where  $m_1F_1, 
\dots m_kF_k$ are  the multiple fibers. Since $p_g(Y)=2$ we have $a=1$; since $K_YB=1$, we conclude that $B$ is a section of the elliptic fibration and that there are no multiple fibers. 
Set $L:=2B+7F$; one has $LB=1$ and  $L^2=16$,   $K_YL=2$.  We write $L=K_Y+(6F+2B)$; since $6F+2B$ is nef and big, Kawamata-Viehweg vanishing applies and $h^0(L)=\chi(L) =10$.  A similar argument shows that $H^1(7F+B)=0$ and therefore the restriction map $H^0(L)\to H^0(\OO_B(1))$ is surjective and  the linear system $|L|$ is base point free. Let $\fie\colon Y\to \pp^9$ be the morphism defined by $L$ and let $\Sigma$ be the image of $\fie$.  The morphism  $\fie$ maps a general $F$ 2-to-1 onto a line and it maps $B$   to a line $r$ that meets the  images of the elliptic fibers of $Y$ at distinct points. So the degree $m$   of $\fie$ is equal to $2$ and  $\deg\Sigma=L^2/2=8$.  By the classification of surfaces of minimal degree   in $\pp^N$ the surface is either a cone over the rational normal curve of degree 8 or  a smooth linear scroll over $\pp^1$. Since   $\Sigma$  contains the  line $r$ that meets each  ruling  at a  distinct point, we conclude that  $\Sigma\cong \F_6$ and   the line $r$  corresponds to  the infinity section $\sigma\inff$ of $\F_6$. In addition, it is not hard to see that $B$ is contained in the ramification locus of $\fie$. 

Let $Y\to \bar Y\overset{\pi}{\to} \F_6$ be the Stein factorization of $f$: the map $Y\to \bar Y$ contracts  precisely the $(-2)$-curves of $Y$ that do not meet $B$, so $\bar Y$ has canonical singularities. The map $\pi$ is a flat double cover; we write $D:=\sigma\inff  +D_1$. Since the preimage  of a general ruling $\Gamma$  of $\F_6$ is an elliptic curve and $D$ is divisible by 2 in $\pic(\F_6)$, we may write  $D_1=3\sigma\inff+2a\Gamma$. 
The usual formulae for double covers give  \[K_{\bar Y}=\pi^*(K_{\F_6}+2\sigma\inff+a\Gamma)=\pi^*((a-8)\Gamma).\] 
Since $p_g(\bar Y)=p_g(Y)=2$, we obtain $a=9$ and $D_1\in |3\sigma_0|$. The singularities of $Y$ occur above the singularities of $D$, which are  therefore  negligible because $Y$ has canonical singularities.
\medskip

Conversely, given a cover as in (i), $\bar Y$ has canonical singularities and  is smooth above $\sigma\inff$. If we write $\pi^*\sigma\inff =2B$ we get $-12=2\sigma\inff^2=4B^2$, namely $B^2=-3$ and $B$ is a $(-3)$-curve.  As noted above, the ruling of $\F_6$ pulls back to a pencil $|F|$ of elliptic curves  and  the curve $B$ is a section of $|F|$,  therefore  $|F|$ has  no multiple fibers. Denote by $\bar Y\to Y$ the minimal desingularization. Then the same computations as before give $p_g(Y)=p_g(\bar Y)=2$. 
 Since $\sigma\inff$ and $D_1$ are disjoint, the restriction of $D$ to a ruling of $\Gamma$ cannot be divisible by 2. So the strict transform in $Y$ of a ruling of $\F_6$ is always irreducible and the components of a reducible fiber  that do not meet $B$ are precisely the exceptional curves of $\bar Y\to Y$.
\end{proof}

\begin{example}\label{ex: (b)}
Let $Y$ be an elliptic surface with $p_g(Y)=2$, $q(Y)=0$ such that:
\begin{itemize}
\item $Y$ has a $(-3)$-section  $B$
\item $Y$ has  an  $I_2$ fiber $F_2$, and all the remaining fibers are irreducible.
\end{itemize}
By Lemma \ref{lem: elliptic-3}, the surface $Y$ is the minimal resolution of a  surface $\bar Y$ which is a double cover $\pi\colon \bar Y\to \F_6$ branched on a divisor $D\in |\sigma\inff+3\sigma_0|$ with an ordinary double point $P$ and no other singularity. The $I_2$ fiber arises as the pull-back of the ruling  of $\F_6$ through $P$ and $B$ is the preimage  of $\sigma_{\infty}$.

Consider an  irreducible singular fiber $F_1$, of type either $I_1$ or $II$,  and let $Q\in F_1$ be the singular point. 
Let  $\tilde Y\to Y$ be the blow-up at the singular point $Q$,  denote by $A$ the strict transform of $F_1$ and by $C$ 
 (the strict transform of) the component of $F_2$ that meets $B$ (see Figure \ref{fig:  index 3 surface}).
 Then $A,B,C$ is a string of type $[4,3,2]$ that can be blown down to obtain a surface $X$ with a singularity of
 type $\frac{1}{18}(1,5)$,  $K^2_X=1$, $p_g(X)=2$ and  $q(X)=0$ (see Remarks \ref{rem: T-surf} and \ref{rem: pg constant}). 
 The pull-back of $K_X$ to $\tilde  Y$ is equal to $K_{\tilde Y}+\frac 23A+\frac23B+\frac13C$. It is not hard to check that it is a nef divisor and 
  that $A,B$ and $C$ are the only curves that have zero intersection with it. So $K_X$ is ample and $X$ is a  stable T-singular surface of type $\frac{1}{18}(1,5)$.


\end{example}
\begin{rem}\label{rem: moduli-n=3}
The construction of Example \ref{ex: (b)} depends on 27 moduli. Indeed,  the pair $(Y, B)$ determines $X$ up to a finite number of possibilities, so it is enough to count parameters for the pairs $(Y,B)$ with an $I_2$ fiber. These are determined  by the branch locus $D$ of $\pi\colon \bar Y\to \F_6$, that has a double point.
The linear system $|3\sigma_0|$ has dimension 39, and the  curves with a double point give a codimension 1 subvariety.
 Since the automorphism group of $\F_6$ has dimension 11, we are left with $38-11=27$ parameters.
\end{rem}

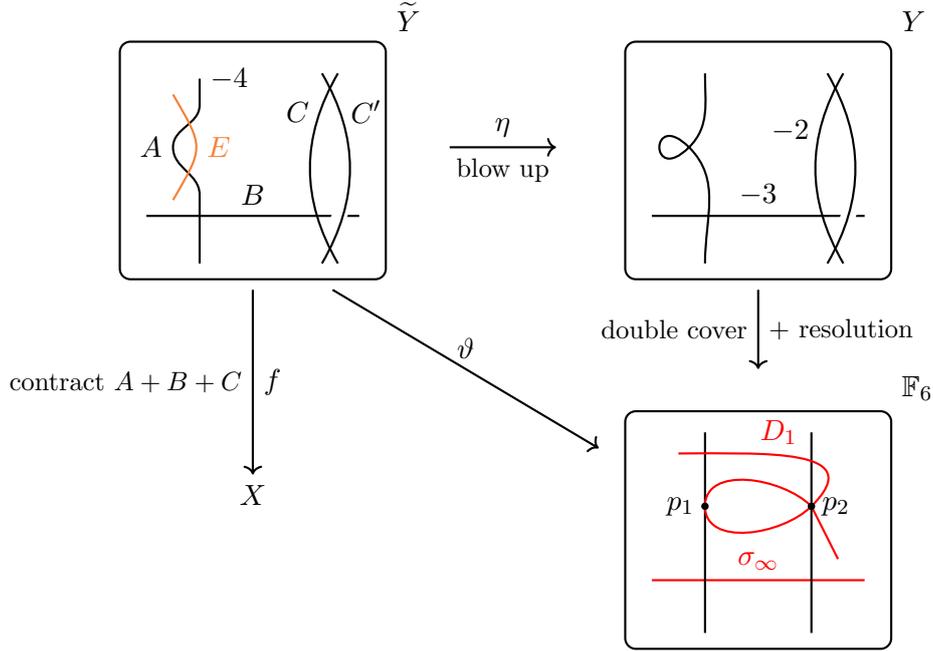
\begin{figure}
\begin{tikzpicture}
[curve/.style ={thick, every loop/.style={looseness=10, min distance=30}},
exceptional/.style = {Orange},
scale = 0.7
]

\begin{scope}[xshift=-9.5cm]
\draw[thick, rounded corners] (-2.5, -1.5) rectangle (2.5, 3) node [above right] {$\tilde Y$};
\draw [curve,->] (0, -1.7) to node [right] { $f$} node [left] {\small contract $A+B+C$} ++(0,-3.5) node [below] {$X$} ;
\begin{scope}[curve]
\draw  (-2,-0.3)  to  node[above]{$B$} (2,-0.3);
\draw[line width = .2cm,   white] (1.3, -1) to[bend right]  ++(0,3.6);
\draw (1.3, -1.2) to[bend right] node[pos = 0.8, right]{$C'$} ++(0,3.6);
\draw (1.6, -1.2) to[bend left] node[pos = 0.8,left] {$C$} ++(0,3.6);
\draw  (-1,-1.2) to ++ (0, 1.3)  to[out=90, in = -90] ++ (-0.5,0.9) node[left] {$A$} to[out=90, in = -90] ++ (0.5,0.8) to ++(0,0.5) node [right] {$-4$};
\draw[exceptional] (-1.5, 0) to [bend right, looseness = 1.5] node[right] {$E$} ++(0,2);

\end{scope}
\draw [curve,->] (3.7, 1) to node[above] {$\eta$} node[below] {\small blow up} ++(2,0);
\end{scope}

\begin{scope}
\draw[thick, rounded corners] (-2.5, -1.5) rectangle (2.5, 3) node [above right] {$Y$};
\begin{scope}[curve]
\draw [curve] (-2,-0.3)  to node [above] {$-3$}  (2,-0.3);
\draw[line width = .2cm,   white] (1.3, -1.2) to[bend right] ++(0,3.6);
\draw (1.3, -1.2) to[bend right] ++(0,3.6);
\draw (1.6, -1.2) to[bend left] node[pos = 0.7,left] {$-2$}++(0,3.6);
\draw [curve] (-1,-1.2) to[out=90, in=-45] (-1.3, 1) to[out=135, in =225,loop] () to[out=45, in = 270]  (-1,2.4);
\end{scope}
\draw [curve,->] (0, -1.7) to  node[left]{\text{\small double cover}} node[right]{\text{\small + resolution}} ++(0,-1.5);
\draw[curve, -> ](-8, -1.7) to node[above] {$\theta$} ++(5, -3);
\end{scope}

\begin{scope}[yshift = -7cm]
\draw[thick, rounded corners] (-2.5, -1.5) rectangle (2.5, 3) node [above right] {$\IF_6$};
\begin{scope}[curve]
\draw [curve, red] (-2,-0.2)  to node [above] {$\sigma_\infty$}  (2,-0.2);
\draw [curve] (-1,-1.2) to  ++ (0,3.8);
\draw [curve] (1,-1.2) to  ++ (0,3.8);
\coordinate (p1) at (-1, 1.2);
\coordinate (p2) at (1, 1.2);
\draw[curve, red] (p2) ++ (.5, -1) -- (p2) to[out = 135, in = 90] (p1) to[out = -90, in = -135] (p2) to[out = 45, in = 0, looseness = 1.5]  node [pos = .6, above] {$D_1$}(-1.5, 2.2);
\fill (p1) circle [radius = 2pt] node[left] {$p_1$};
\fill (p2) circle [radius = 2pt] node[right] {$p_2$};
\end{scope}
\end{scope}

\end{tikzpicture}

\caption{Construction of an I-surface of type $\frac{1}{18}(1,5)$, using a nodal fibre.}\label{fig:  index 3 surface}
\end{figure}

In order to show that the surfaces constructed in Example \ref{ex: (b)} are smoothable, we give an alternative description by computing their canonical ring: 
\begin{prop}\label{prop: can ring index 3}
 Let $X$ be as in Example \ref{ex: (b)}. Then there exist sections $x_1, x_2\in H^0(X, K_X)$, $y\in H^0(X, 2K_X)$, $u\in H^0(X,3 K_X)$ and $z\in H^0(X, 5K_X)$ such that the canonical ring of $X$ is
 \[ R(X, K_X) = \IC[x_1, x_2, y, u, z]/ (x_1^3- x_2y, z^2 -f_{10}(x_1, x_2, y, u))\]
 for a weighted homogeneous polynomial $f_{10}$ of degree $10$.
 
 In particular, $X$ is embedded into $\IP(1,1,2,3,5)$ as a complete intersection of degree $(3,10)$.
\end{prop}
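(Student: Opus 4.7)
The plan is to identify the canonical ring $R(X,K_X)=\bigoplus_{m\ge 0} H^0(X,mK_X)$ with the displayed complete-intersection ring by matching Hilbert series and then exhibiting the two relations. I start by computing all plurigenera: applying formula~\eqref{eq: plurigenus-lt} to the log resolution of the $\tfrac{1}{18}(1,5)$ singularity (boundary $\Delta=\tfrac{2}{3}A+\tfrac{2}{3}B+\tfrac{1}{3}C$), with $\chi(\cO_X)=3$ and $K_X^2=1$, and using the direct value $h^0(K_X)=p_g=2$ at $m=1$, one obtains
\[
\sum_{m\ge 0} h^0(mK_X)\,t^m \;=\; \frac{(1-t^3)(1-t^{10})}{(1-t)^2(1-t^2)(1-t^3)(1-t^5)}.
\]
This series is a priori compatible with two weighted-graded-ring presentations: the codimension-$2$ complete intersection of degrees $(3,10)$ in $\IP(1,1,2,3,5)$ claimed in the statement, and the simpler hypersurface of degree $10$ in $\IP(1,1,2,5)$. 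These cannot be distinguished from the Hilbert series alone; they are distinguished by the singularities.

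Next, I would pick a basis $x_1,x_2\in H^0(K_X)$ and $y\in H^0(2K_X)$ completing $\{x_1^2,x_1x_2,x_2^2\}$ to a basis. The second presentation above would realise $X=\Proj R(X,K_X)$ as a weighted hypersurface of degree $10$ in $\IP(1,1,2,5)$; but $\IP(1,1,2,5)$ is singular only at the index-$2$ point $[0:0:1:0]$ and the index-$5$ point $[0:0:0:1]$, so by the adjunction $\omega_X=\cO_X(1)$ a hypersurface inherits only singularities whose Cartier index divides $2$ or $5$. Since $X$ has a $\tfrac{1}{18}(1,5)$ point of index $3$, this alternative is ruled out. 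Consequently a fifth generator $u\in H^0(3K_X)$ is required, and the Step~$1$ dimension $h^0(3K_X)=6$ forces a unique linear relation among the six cubic monomials $\{x_1^3,x_1^2x_2,x_1x_2^2,x_2^3,x_1y,x_2y\}$, say $My+N=0$ with $M$ a linear form and $N$ a cubic form in $x_1,x_2$. Algebraic independence of $x_1,x_2$ in $R(X,K_X)$ (because $|K_X|\colon X\dasharrow\IP^1$ is dominant, so $x_1^3,x_1^2x_2,x_1x_2^2,x_2^3$ stay linearly independent) forces $M\neq 0$, and the choice of $y$ outside $\Sym^2 H^0(K_X)$ forces $M\nmid N$. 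A $GL_2$-change of $\langle x_1,x_2\rangle$ bringing $M$ to $-x_2$, followed by a shift $y\mapsto y+Q(x_1,x_2)$ absorbing the $x_2$-divisible part of $N$, normalises the relation to $x_1^3-x_2 y=0$.

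For the last generator and relation, the subalgebra $\IC[x_1,x_2,y,u]/(x_1^3-x_2y)$ has Poincar\'e series $\tfrac{1-t^3}{(1-t)^2(1-t^2)(1-t^3)}=\tfrac{1}{(1-t)^2(1-t^2)}$, giving only $12$ elements in degree $5$, while $h^0(5K_X)=13$; hence a further generator $z\in H^0(5K_X)$ is needed. Matching with the Hilbert series from Step~$1$ then forces a unique additional relation in degree $10$, necessarily of the form $z^2-f_{10}(x_1,x_2,y,u)=0$ for a weighted homogeneous $f_{10}$ of degree $10$. The resulting surjection
\[
\IC[x_1,x_2,y,u,z]/(x_1^3-x_2y,\,z^2-f_{10}) \;\twoheadrightarrow\; R(X,K_X)
\]
matches $\sum_m h^0(mK_X)\,t^m$ in every degree and is therefore an isomorphism of graded rings, proving the stated presentation and embedding $X=\Proj R(X,K_X)$ as a complete intersection of type $(3,10)$ in $\IP(1,1,2,3,5)$.

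The hard part is ruling out the simpler $(1,1,2,5)$ presentation in favour of the $(1,1,2,3,5)$ one: this is the only place where the specific singularity of $X$, rather than the Hilbert series alone, is essential. Once the extra degree-$3$ generator $u$ is in hand, the normalisation of the cubic relation is elementary linear algebra, and the degree-$10$ relation follows purely from Hilbert-series bookkeeping.
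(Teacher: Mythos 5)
Your overall strategy --- rule out the degree-$10$ hypersurface model in $\IP(1,1,2,5)$ using the index-$3$ singularity, then pin down the presentation by Hilbert-series bookkeeping --- is different from the paper's, and the singularity observation itself is correct and nice: since $\cO(1)$, $\cO(2)$, $\cO(5)$ are invertible respectively on the smooth locus and at the two singular points of $\IP(1,1,2,5)$, a point of Cartier index $3$ cannot lie on a surface canonically embedded there. But there is a genuine gap in the step ``consequently a fifth generator $u\in H^0(3K_X)$ is required, and \dots\ forces a unique linear relation among the six cubic monomials.'' Ruling out one presentation only tells you that \emph{some} extra generator is needed \emph{somewhere}; pushing your own argument further only shows it must sit in a degree divisible by $3$. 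It does not exclude, for instance, that $x_1,x_2,y$ generate freely through degree $5$ (note $6$ monomials vs.\ $h^0(3K_X)=6$ and $9$ vs.\ $h^0(4K_X)=9$, so no generator is \emph{numerically} forced in degrees $3$ or $4$) and that the missing generator appears in degree $6$ or $9$, where the index-$3$ point could sit at a $\mu_6$- or $\mu_9$-point of the ambient weighted projective space. The existence of a relation in degree $3$ --- equivalently, the failure of surjectivity of $\Sym^3H^0(K_X)\oplus H^0(K_X)\otimes H^0(2K_X)\to H^0(3K_X)$ --- is exactly the content that must be \emph{proved}, and the paper does this by explicit computation: it writes $f^*mK_X$ on the resolution $\tilde Y$ in terms of the double cover $\theta\colon\tilde Y\to\IF_6$, identifies $x_1=cc'e$, $x_2=ae^3$, $y=(cc')^3b$ as concrete sections, and reads off $x_1^3=x_2y$ directly. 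Your normalisation of a degree-$3$ relation to $x_1^3-x_2y$ (using that $R$ is a domain) is fine once such a relation is known to exist, but its existence cannot be extracted from the Hilbert series plus the singularity constraint alone.

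A secondary gap of the same nature occurs in degree $10$: Hilbert-series matching forces \emph{a} relation there, but not that it has the form $z^2-f_{10}(x_1,x_2,y,u)$; a relation $zg_5-f_{10}$ with $0\neq g_5\in S_5$ would give a complete intersection with the identical Hilbert series. The paper excludes this by showing that $z$ can be chosen anti-invariant under the bicanonical involution while the subring $S$ generated in degrees $\leq 3$ is invariant, so that $R_{10}=S_{10}\oplus z\cdot S_5$ is an eigenspace decomposition and the invariant element $z^2$ must lie in $S_{10}$. To repair your proof you would need either to import these explicit geometric computations on $\tilde Y$ (or an equivalent argument with the involution), or to carry out a genuinely complete case analysis of all generator/relation patterns compatible with both the Hilbert function and the local index constraints at the singular point --- neither of which is present in the proposal.
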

\begin{proof}
Recall that $p_g(X)= 2$; in addition, one can check that the correction term  $\frac 12\{m\Delta\}\left(\{m\Delta\}-\{\Delta\}\right)$ vanishes for all values of $m\ge 2$, so that \eqref{eq: plurigenus-lt}  gives:
\begin{equation}\label{eq: hR}
h^0(X, mK_X) = 3+ \frac{m(m-1)}2 \qquad (m\geq 2).
\end{equation}
We want to compute the canonical ring using the identification $H^0(X, mK_X) =H^0(\tilde Y, \lfloor \pi^* mK_X\rfloor)$. We want to relate these to linear systems on $\IF_6$ as in the diagram, where we already added some information on the $3$-canonical map explained below:
\begin{equation}\label{diag: 3 canonical index 3}
 \begin{tikzcd}
  \tilde Y \rar{\eta} \dar{f}\arrow[bend left]{rrr}{\theta} & Y \rar[swap]{\text{resolution}}&\bar Y\rar{\pi}  & \IF_6 \rar{|\sigma_0|} & \kq_6 \rar[hookrightarrow]\dar[dashed] & \IP^7\dar[dashed]{\text{projection from line}}\\
  X\arrow[dashed]{rrrr}{|3K_X|} &&&& \kq_4 \rar[hookrightarrow] & \IP^5
 \end{tikzcd}.
\end{equation}
For later reference we compute the relevant divisors on $\tilde Y$. As above we denote by $\Gamma$  a ruling on $\IF_6$ and by $C'$ the $-2$-curve such that $C+C'$ is a fiber of the elliptic fibration of $\tilde Y$. The configuration is depicted in Figure  \ref{fig: index 3 surface}. 
 \begin{align*}
  f^*K_X  & = \theta^*\Gamma + E + \frac13\left( 2A + 2B + C\right),\\
  f^*2 K_X  & = \theta^*3\Gamma + B + \frac13\left( A + B + 2C\right),\\
  f^*3 K_X  & = \theta^*( 6\Gamma + \sigma_\infty) - E - C'\\
  & = \theta^*\sigma_0-E-C',\\
  f^*4 K_X  & = \theta^*( 7\Gamma + \sigma_\infty) - C'+ \frac13\left( 2A + 2B + C\right),\\
  f^*5 K_X  & = \theta^*( 9\Gamma + \sigma_\infty) - E - C' +B +\frac13\left( A + B + 2C\right).
 \end{align*}
 Note that by construction the branch divisor $D$ of $\theta$ is equal to $\sigma_{\infty}+D_1$, with  $D_1\in |3\sigma_0| \subset |4\sigma_\infty + 18 \Gamma|$,   so that 
 \[ H^0(\tilde Y , \theta^*(a\Gamma + b\sigma_\infty)) \isom H^0(\IF_6, a\Gamma + b\sigma_\infty)\oplus H^0(\IF_6, (a-9)\Gamma + (b-2) \sigma_\infty),\]
 which is the decomposition into the invariant and anti-invariant part. We are ready to compute the relevant pluricanonical systems on $Y$, but for the ring structure we also need the multiplication maps. Considering these on $\tilde Y$ we need to account for correction 
terms, for example,
\[
 \begin{tikzcd}
  H^0(\lfloor f^* K_X\rfloor) \times H^0(\lfloor f^* K_X\rfloor)  \rar&  H^0(2\lfloor f^* K_X\rfloor)\rar{+A+B} &   H^0(\lfloor 2f^* K_X\rfloor)
 \end{tikzcd}.
\]
 
 We now compute the pullback of the canonical ring to $\tilde Y$. 
 Let us denote the section of a line bundle associated to a curve by the corresponding lower case letter.
Then $H^0(X, K_X) = H^0(\tilde Y, \theta^* \Gamma+E)  = e\cdot  H^0(\tilde Y, \theta^*\Gamma)$, where the second equality is most easily confirmed by dimension reasons. Thus the canonical pencil is spanned by 
\[
x_1 =cc'e\text{ and } x_2 = (ae^2)e                                                                   
\]
Taking the correction into account, the image of the multiplication map is spanned by 
$\langle x_1^2 = (cc')^2  ae^2 b, x_1x_2 = cc' (ae^2)^2b, x_2^2 = (ae^2)^3b\rangle$, which together with
\[y = (cc')^3b\]
forms a basis of $H^0(\tilde Y , \lfloor2f^*K_X\rfloor)= H^0(\tilde Y,  \theta^* 3\Gamma + B) = b\cdot \theta^* H^0(\IF_6, 3\Gamma)$.
Looking at the next multiplication map 
\[
 \begin{tikzcd}
  H^0(\lfloor f^* K_X\rfloor) \times H^0(\lfloor2 f^* K_X\rfloor)  \rar\arrow{dr}&  H^0(\lfloor f^* K_X\rfloor+\lfloor2 f^* K_X\rfloor)\dar{+A+B+C} \\ &    H^0(3f^* K_X)
 \end{tikzcd}.
\]
we find the claimed relation $x_1^3 = x_2y$. Because of this relation, the image of the multiplication map is of dimension $5$ and we need a further generator $u\in H^0(X, 3K_X)$, not contained in the image.
 
 \begin{rem}
 It is now instructive to look at the $3$-canonical map, as alluded to in Diagram \ref{diag: 3 canonical index 3}. 
  On $\tilde Y$ we have 
 \[H^0(\tilde Y , 3f^*K_X) = H^0(\tilde Y, \theta^* \sigma_0 -E-C') \subset \theta^*H^0(\IF_6, \sigma_0).\]
 Note that $\theta$ maps $E$ and $C'$ to points $p_1, p_2\in \IF_6$ such that $D_1$ is tangent to a ruling in $p_1$ and $D_1$ has a node at $p_2$, necessarily on a different ruling, see Figure \ref{fig:  index 3 surface}. 
 Thus the 3-canonical  system is precisely $\theta^*H^0(\IF_6, \ki_{\{p_1, p_2\}} (\sigma_0))$. Since $|\sigma_0|$ maps $\IF_6$ to the cone $\kq_6$ over the rational normal curve of degree $6$ in $\IP^7$, the image of the $3$-canonical map of $X$ is the projection of $\kq_6 $ from a line through two general points of $\kq_6$, which is the cone $\kq_4\subset \IP^5$ over the rational normal curve of degree $4$. 
 
 In this description, $u$ is the preimage of any hyperplane section of $\kq_6$ containing $p_1,p_2$ and not containing the vertex.
 \end{rem}
 
 We have thus found the subring $S$ generated by elements of degree at most $3$ in the canonical ring:
 \[ R:=  R(X, K_X) \supset \IC\left[ H^0(mK_X)\colon m\leq 3\right] = \IC[x_1, x_2, y, u]/(x_1^3-x_2y) = :S\]
To ease computations later, recall that the Hilbert series of a weighted polynomial ring $\IC[w_1, \dots, w_r]$ with weights $d_1, \dots, d_r$ is given by $\prod_{i=1}^r \left( 1- t^{d_i}\right)^{-1}$ and by the additivity of Hilbert series we get the Hilbert functions for complete intersections. In particular,
\begin{align*}&h_S(t) = \frac{1}{(1-t)^2(1-t^2)}= \frac{(1-t^3)}{(1-t)^2(1-t^2)(1-t^3)}, \\
 &h_R(t) = \frac{(1-t^{10})}{(1-t)^2(1-t^2)(1-t^5)} = \frac{(1-t^3)(1-t^{10})}{(1-t)^2(1-t^2)(1-t^3)(1-t^5)},
\end{align*}
 because by \eqref{eq: hR} the Hilbert function of $R$ coincides with the Hilbert function of the canonical ring of  a smooth I-surface, which in turn is a complete intersection of degree 10 in $\pp(1,1,2,5)$ (cf. \ref{ssection: I-surf}).
 
 Note that the surface $X$ carries an involution $\iota$, which acts on the canonical ring and that birationally $\iota$ is the covering involution from the double cover of $\pi\colon \bar Y \to \IF_6$.
Thus the whole subring $S$ is invariant under the involution.

A  dimension computation gives $S_4 = R_4$, so we now want to analyse the $5$-canonical system, which is of dimension $13$, while $S_5$ is of dimension $12$. Since all sections in $S_2$ are divisible by $b$, we see that 
\[ S_5 =  S_2\cdot S_3 \subset H^0(\tilde Y , \lfloor 5 f^*K_X\rfloor- B) \isom H^0(\IF_6,  \ki_{\{p_1, p_2\}}(9\Gamma+\sigma_\infty)),\]
and in fact equality holds as both sides are of dimension $12$. 
 We claim that this is indeed the $\iota$-invariant part of the linear system $|\lfloor 5K_X\rfloor|$, in which it is clearly contained.

For this consider the singular double cover  $\pi \colon  \bar Y \to \IF_6$ branched over $D_1 + \sigma_\infty$ occuring in the Stein factorisation of $\theta$ and write 
$\pi^*(D_1+\sigma_\infty) = 2R_1 + 2 B$. Then 
\[\pi^*(9\Gamma+ 2 \sigma_\infty) - B =\frac 12 \pi^*(D_1 + \sigma_\infty) - B\sim (R_1+B)-B.\]
If $\rho$ is the section defining the divisor $R_1+ B$ we have
\[H^0(\bar Y, \pi^*(9\Gamma + 2 \sigma_\infty))  =\pi^*H^0(\IF_6,   9\Gamma + 2 \sigma_\infty)\oplus \langle \rho \rangle\]
as a decomposition into $\iota$-eigenspaces. 
Since the pull back of $\rho$ to $\tilde Y$ vanishes along $B$, $E$ and $C'$ it defines an anti-invariant element of $\lfloor 5K_X\rfloor$, which we call $z$. 
By the restriction sequence, $z$ restricts to a non-zero constant section on $B$. 

We claim that $R_{10} = S_{10} \oplus z\cdot S_5$. Computing the dimensions, e.g., using the Hilbert functions, we see that the dimensions match on both sides. 
The intersection of the two subspaces is zero, since by our choice of $z$, one is invariant and one is anti-invariant under the action of the involution. 

So since $z^2$ is now an invariant section there is a relation of the form $z^2- f_{10}(x_1, x_2, y, u)$ in $R_{10}$. 
 We conclude that we have an injection
 \[ S[z]/(z^2-f_{10}) \into R\]
 which has to be an isomorphism because both rings have the same Hilbert function.
 In total, the canonical ring of $X$ has the claimed format.
\end{proof}

\begin{cor}\label{cor: index 3 smoothable}
 A surface $X$ as in Example \ref{ex: (b)} is smoothable. 
\end{cor}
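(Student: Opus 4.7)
The strategy is to exploit the complete intersection description of $X$ from Proposition~\ref{prop: can ring index 3} and construct an explicit $\IQ$-Gorenstein smoothing by deforming the cubic relation inside the ambient weighted projective space.

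Concretely, write $X=V(F_3,F_{10})\subset\IP(1,1,2,3,5)$ with $F_3=x_1^3-x_2y$ and $F_{10}=z^2-f_{10}(x_1,x_2,y,u)$. I would consider the flat family
\[
\kx := V\!\bigl(x_1^3-x_2y-tu,\; z^2-f_{10}(x_1,x_2,y,u)\bigr)\subset\IP(1,1,2,3,5)\times\spec\IC[t],
\]
whose central fibre is $X$. For $t\neq 0$ the deformed cubic is solvable for $u$, namely $u=(x_1^3-x_2y)/t$, so the rational projection $\IP(1,1,2,3,5)\dashrightarrow\IP(1,1,2,5)$ forgetting $u$ restricts to an isomorphism of $\kx_t$ onto a hypersurface of degree $10$ of the form $z^2=h_{10}(x_1,x_2,y;t)$ in $\IP(1,1,2,5)$, which is a Gorenstein I-surface as in \S\ref{ssection: I-surf}. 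For $f_{10}$ chosen sufficiently generically among those giving our prescribed central fibre, this hypersurface is smooth, providing the desired smooth generic fibre.

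The second task is to show that $\kx\to\spec\IC[t]$ is a $\IQ$-Gorenstein deformation in the sense of \S\ref{sec: stable-surf}. Since $\kx$ is a flat complete intersection inside the $\IQ$-Gorenstein ambient $\IP(1,1,2,3,5)\times\spec\IC[t]$ cut out by two $\IQ$-Cartier divisors, adjunction gives that $\omega_{\kx/\spec\IC[t]}$ is $\IQ$-Cartier; the compatibility of its reflexive powers with base change is then an analytically local assertion at the unique T-singular point $R\in X$.

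The main obstacle is precisely this final local verification: one must compare the germ of $\kx\to\spec\IC[t]$ at $R$ with the standard $\IQ$-Gorenstein smoothing of the T-singularity $\frac{1}{18}(1,5)$ recalled in \S\ref{sec: T-sing} and \cite[\S 3]{ksb88}. The key point is that the deformed cubic introduces the variable $u$ linearly with non-trivial coefficient in $t$, so the deformation is non-trivial on the index-one cover of the singularity; granting this identification, the family above is a $\IQ$-Gorenstein smoothing of $X$, proving smoothability.
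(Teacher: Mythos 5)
Your overall strategy coincides with the paper's: use the complete intersection presentation of Proposition~\ref{prop: can ring index 3} and deform the cubic relation to $x_1^3-x_2y-tu$, so that for $t\neq 0$ one can eliminate $u$ and land on a degree~$10$ hypersurface in $\IP(1,1,2,5)$. However, there are two genuine gaps. First, you claim the general fibre is smooth ``for $f_{10}$ chosen sufficiently generically among those giving our prescribed central fibre'' --- but $f_{10}$ is not a free parameter: it is determined (up to coordinate change) by the canonical ring of the given surface $X$, and after substituting $u=(x_1^3-x_2y)/t$ the resulting quintic branch divisor on the quadric cone is completely pinned down, with no genericity available. As written, your family only exhibits $X$ as a degeneration of Gorenstein I-surfaces, which need not be smooth (one would then need an extra step, e.g.\ that Gorenstein I-surfaces are themselves smoothable). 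The paper sidesteps this by perturbing the second equation as well, to $z^2-f_{10}+tg_{10}$ with $g_{10}$ a \emph{general} degree~$10$ form, which forces smoothness of the general fibre.

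Second, you correctly identify the $\IQ$-Gorenstein condition as ``the main obstacle,'' but you do not resolve it: the phrase ``granting this identification'' leaves the decisive step --- comparing the germ at the singular point with the standard $\IQ$-Gorenstein smoothing of $\frac{1}{18}(1,5)$ --- unproven. This local comparison is not needed and is harder than the global argument actually used: since $\kx$ is a relative complete intersection in $\IP(1,1,2,3,5)\times\kb$, the sheaves $\ko_{\kx}(m)$ are flat over $\kb$ and restrict on the central fibre to $\omega^{[m]}_{\kx_0}=\ko_{\kx_0}(m)$, which is exactly Koll\'ar's condition (by \cite{Kollar13} this flatness is equivalent to the family being $\IQ$-Gorenstein). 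Replacing your pending local analysis by this observation, and adding the $tg_{10}$ term, closes both gaps.
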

\begin{proof}
Write the canonical ring of $X$ as in Proposition \ref{prop: can ring index 3}. Now consider the family
 $\kx \subset \IP(1,1,2,3,5)\times \IA^1_t\to \IA^1 = \kb$ cut out by the equations.
\[tu - x_1^3+x_2y, z^2 -f_{10}(x_1, x_2, y, u)+tg_{10}\]
where $g$ is a general  homogenous polynomial of degree $10$. 

Note that the general fibre is a smooth I-surface, as if $t\neq0$ we can eliminate the variable $u$ to get a hypersurface of degree $10$ in $\IP(1,1,2,5)$. When we set $t=0$ we find the equations of our surface  $X = \kx_0$. 

Clearly the family is flat over the curve $\kb$, because every fibre is a surface, i.e., every component of $\kx$ dominates $\kb$. It is also a $\IQ$-Gorenstein smoothing because Koll\'ar's condition
\[ \ko_{\kx}(m)|_{\kx_0} =  \omega_{\kx/\kb}^{[m]}|_{\kx_0} \overset{\isom}{\to}  \omega^{[m]}_{\kx_0} = \ko_{\kx_0}(m)\]
is met; 
this is equivalent to $\ko_\kx(m)$ being flat over $\kb$ by \cite{Kollar13}. \end{proof}

\subsection{The case of index $n=5$}
An example of a T-singular  I-surface of type  $\frac{1}{25}(1,14)$  can be found in \cite{rana-urzua17} right after the proof of Thm.~3.2. For the reader's convenience we recall here its description, visualized in Figures \ref{fig: RU surface} and \ref{fig: RU surface cuspidal}.
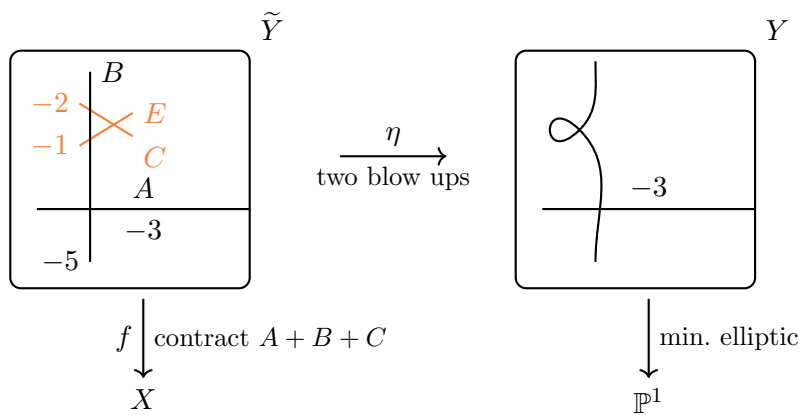
\begin{figure}
\begin{tikzpicture}
[curve/.style ={thick, every loop/.style={looseness=10, min distance=30}},
exceptional/.style = {Orange},
scale = 0.7
]

\begin{scope}[xshift=-9.5cm]
\draw[thick, rounded corners] (-2.5, -1.5) rectangle (2, 3) node [above right] {$\tilde Y$};
\draw [curve, exceptional] (-1.2,1.2) node[below, left]{$-1$} to  ++ ( 1, .62) node[ right]{$E$};
\draw [curve, exceptional] (-1.2,2) node[above,left ]{$-2$} to  ++(1,-.62) node[ below right]{$C$};
\draw [curve] (-1,-1) node[left]{$-5$} to  (-1,2.6) node[right] {$B$};
\draw [curve] (-2,0)  to node [below] {$-3$}  node[ above] {$A$} (2,0);
\draw [curve,->] (3.7, 1) to node[above]  {$\eta$} node[below] {\small two blow ups} ++(2,0);
\draw [curve,->] (0, -1.7) to node [left] { $f$} node [right] {\small contract $A+B+C$} ++(0,-1.5) node [below] {$X$} ;
\end{scope}

\begin{scope}
\draw[thick, rounded corners] (-2.5, -1.5) rectangle (2, 3) node [above right] {$Y$};
\draw [curve] (-1,-1) to[out=90, in=-45] (-1.3, 1.5) to[out=135, in =225,loop] () to[out=45, in = 270]  (-1,2.8);
\draw [curve] (-2,0)  to node [above] {$-3$}  (2,0);
\draw [curve,->] (0, -1.7) to  node[right]{\small min.\ elliptic} ++(0,-1.5) node [below] {$\IP^1$};
\end{scope}
\end{tikzpicture}

\caption{Construction of an RU surface ($\frac{1}{24}(1,4)$ singularity), nodal case}\label{fig: RU surface}
\end{figure}

\begin{figure}
\begin{tikzpicture}
[curve/.style ={thick, every loop/.style={looseness=10, min distance=30}},
exceptional/.style = {Orange},
scale = 0.7
]

\begin{scope}[xshift=-9.5cm]
\draw[thick, rounded corners] (-2.5, -1.5) rectangle (2, 3) node [above right] {$\tilde Y$};
\draw [curve, exceptional] (-1.4,1.8) node[below, above left]{$-1$} to  ++( 1.2, -.6) node[ right]{$E$};
\draw [curve, exceptional] (-1.4,1.4) node[above,below left ]{$-2$} to  ++(1.2, .6) node[right]{$C$};
\draw [curve] (-1,-1) node[left]{$-5$} to  (-1,2.6) node[right] {$B$};
\draw [curve] (-2,0)  to node [below] {$-3$}  node[ above] {$A$} (2,0);
\draw [curve,->] (3.7, 1) to node[above]  {$\eta$} node[below] {\small two blow ups} ++(2,0);
\draw [curve,->] (0, -1.7) to node [left] { $f$} node [right] {\small contract $A+B+C$} ++(0,-1.5) node [below] {$X$} ;
\end{scope}

\begin{scope}
\draw[thick, rounded corners] (-2.5, -1.5) rectangle (2, 3) node [above right] {$Y$};
\draw [curve] (-1,-1) to ++(0, 1) to[out = 90, in = 0] ++( -.65, 1.5) to[out = 0, in = -90] (-1, 2.8);
\draw [curve] (-2,0)  to node [above] {$-3$}  (2,0);
\draw [curve,->] (0, -1.7) to  node[right]{\small min.\ elliptic} ++(0,-1.5) node [below] {$\IP^1$};
\end{scope}
\end{tikzpicture}

\caption{Construction of an RU surface ($\frac{1}{24}(1,4)$ singularity), cuspidal case}\label{fig: RU surface cuspidal}
\end{figure}
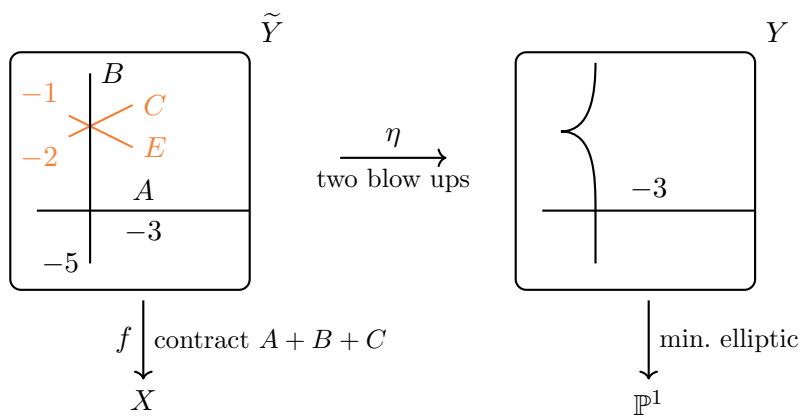

\begin{example}\label{ex: (c)}
Let $Y$ be an elliptic surface with $p_g(Y)=2$, $q(Y)=0$ such that:
\begin{itemize}
\item $Y$ has a $(-3)$-section  $A$
\item  all the elliptic   fibers are irreducible. 
\end{itemize}
By Lemma \ref{lem: elliptic-3}, the surface $Y$ is a double cover $\pi\colon  Y\to \F_6$ branched on a smooth divisor $D\in |\sigma\inff+3\sigma_0|$. 

Let $F_1$ be a singular  fiber and let $Q$ be its singular point: blow up $F_1$ at $Q$ and then at a point $Q_1$ infinitely near to $Q$ and lying on the strict transform of 
$F_1$ to get a surface $\tilde Y$.
 The strict transform of $F_1$ is a $(-5)$ curve $B$, the strict transform of $A$ (that we still denote by $A$) is a $(-3)$-curve,
   the strict transform of the curve of the first blow up is a $(-2)$-curve, which we call $C$, so that  $A,B,C$ is a string of type $[3,5, 2]$ (note that this is true both for  $F_1$ nodal and for $F_1$ cuspidal).  Then the string $A,B,C$ can be blown down to obtain a T-singular surface $X$ of type $\frac{1}{25}(1,14)$ with $K^2_X=1$, $p_g(X)=2$, $q(X)=0$. The pull-back of $K_X$ to $\tilde  Y$ is equal to $K_{\tilde Y}+\frac 35A+\frac45B+\frac25C$. It is not hard to check that it is a nef divisor and  that $A,B$ and $C$ are the only curves that have zero intersection with it. So $K_X$ is ample and $X$ is a    T-singular surface of type $\frac{1}{25}(1,14)$.
\end{example}

\begin{rem}\label{rem: moduli-n=5}
Counting parameters  as in Remark \ref{rem: moduli-n=3}, we obtain 28 moduli for the construction in Example \ref{ex: (b)}. Since  the closure of the  locus of smooth I-surfaces is irreducible of dimension $28$, we conclude that the general surface obtained via this construction is not smoothable. This confirms the infinitesimal computations of \cite{rana-urzua17}, where it is shown that  the obstruction space for $\Q$-Gorenstein deformations is non-zero for these surfaces. 
\end{rem}
The above remark can be made more precise as follows:
\begin{prop}\label{prop: 5 I_1 not smoothable}
Let $X$ be a T-singular surface obtained as in Example \ref{ex: (c)} taking as $F_1$ an irreducible fiber of type $I_1$. 
Then $X$ is not smoothable  and such surfaces give a dense open subset of an irreducible component of $\overline\gothM_{1,3}$ of dimension 28. 
\end{prop}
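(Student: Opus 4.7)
The plan is to combine the involution criterion of Corollary \ref{cor: smoothable-invo} with the dimension count of Remark \ref{rem: moduli-n=5} and a local deformation-theoretic analysis.

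For the non-smoothability part I would argue that a general $X$ in the family $\kz\subset\overline\gothM_{1,3}$ produced by Example \ref{ex: (c)} with $F_1$ of type $I_1$ admits no involution. Suppose $\tau$ is such an involution. It lifts uniquely to an involution $\tilde\tau$ of the minimal resolution $\tilde Y$ preserving the elliptic fibration $\tilde Y\to \IP^1$ coming from the canonical pencil. The fibre over $[F_1]$ is $B+2C+3E$ and is the unique reducible fibre, since Example \ref{ex: (c)} assumes all other elliptic fibres of $Y$ to be irreducible. Its components have pairwise distinct multiplicities $1,2,3$, so $\tilde\tau(B)=B$, $\tilde\tau(C)=C$, $\tilde\tau(E)=E$. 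The restriction $\tilde\tau|_B$ is then an involution of $B\isom \IP^1$ fixing the three distinct points $B\cap A$, $B\cap C$, $B\cap E$, hence $\tilde\tau|_B=\id$. Contracting the $(-1)$-curve $E$ and then the image of $C$ (which becomes a $(-1)$-curve) descends $\tilde\tau$ to an involution $\tau_Y$ of the minimal model $Y$ preserving the elliptic fibration and the $(-3)$-section $A$. For $Y$ general in the family one has $\Aut(Y)=\langle\iota\rangle$, where $\iota$ is the covering involution of the double cover $\pi\colon Y\to \IF_6$, so $\tau_Y=\iota$. However $\iota$ fixes the node $Q\in F_1$ while swapping the two tangent branches of $F_1$ at $Q$, so its lift $\iota'$ to $\mathrm{Bl}_QY$ swaps the two intersection points $Q_1,Q_1'$ of the exceptional divisor with the strict transform of $F_1$. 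Since $\tilde Y=\mathrm{Bl}_{Q_1}\mathrm{Bl}_QY$, the involution $\iota'$ does not extend to $\tilde Y$, contradicting the existence of $\tilde\tau$. Hence a general such $X$ admits no involution, and therefore is not smoothable by Corollary \ref{cor: smoothable-invo}.

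For the component statement, Remark \ref{rem: moduli-n=5} shows that $\kz$ is an irreducible $28$-dimensional locally closed subset of $\overline\gothM_{1,3}$. The main component has dimension $28$ and its generic point parametrises a smooth I-surface, whereas every point of $\kz$ carries a non-canonical T-singularity; hence $\bar\kz$ is not contained in the main component, and by irreducibility it lies in a single other irreducible component $\kc$ with $\dim\kc\ge 28$. To show $\dim\kc=28$ I would analyse the $\Q$-Gorenstein deformations of $X$ at a general $[X]\in\kz$: the tangent directions along $\kz$ account for $28$ parameters, while the one-dimensional Wahl smoothing direction of $\frac{1}{25}(1,14)$ is globally obstructed by the computation of Rana--Urz\'ua recalled in Remark \ref{rem: moduli-n=5}. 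This yields $\dim_{[X]}\overline\gothM_{1,3}\le 28$, so $\bar\kz=\kc$ and $\kz$ is dense open in $\kc$.

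The main obstacle is the rigidity input $\Aut(Y)=\langle\iota\rangle$ for general $Y$, equivalently, ruling out fibration-preserving involutions $\tau_Y$ of $Y$ that act non-trivially on the base $\IP^1$. This requires either a genericity assumption on the branch divisor $D\in|3\sigma_0|$ on $\IF_6$ making $\Aut(\IF_6,D)$ trivial, or a direct classification of Weierstrass-preserving involutions for the specific elliptic surfaces arising in the construction, controlling how the base involution can fix $[F_1]$ while compatibly acting on the section $A$.
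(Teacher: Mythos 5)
You set the argument up exactly as the paper does—invoking Corollary \ref{cor: smoothable-invo}, lifting the involution to $\tilde Y$, and using the multiplicities $1,2,3$ of $B,C,E$ in the fibre together with the three marked points on $B$ to get $\tilde\tau|_B=\id$—but you then abandon the decisive consequence of that fact. Since $B$ is the strict transform of $F_1$ and $\tilde\tau|_B=\id$, the induced involution of $Y$ fixes the nodal curve $F_1$ \emph{pointwise}; this is already a contradiction, because the divisorial part of the fixed locus of an involution of a smooth surface is a smooth curve. That one-line observation closes the argument for \emph{every} such $X$, which is what the proposition asserts. Instead you retreat to the weaker statement that $\tau_Y$ preserves the fibration and the section $A$, and then lean on the rigidity claim $\Aut(Y)=\langle\iota\rangle$ for \emph{general} $Y$ — a claim you do not prove and explicitly flag as the main obstacle. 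This is a genuine gap: as written your argument establishes non-smoothability only for generic members of the family (and even that only modulo the unproved rigidity input), whereas both the statement and the subsequent moduli argument need it for all of them.

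The second half also needs tightening. Knowing that $\kz$ is irreducible of dimension $28$ and not contained in the main component does not by itself show that its closure \emph{is} a component; a priori $\kz$ could sit inside a larger component whose general member is a different kind of stable surface, and your appeal to a ``globally obstructed Wahl smoothing direction'' is not actually derived from the infinitesimal computation you cite. The clean route, which the paper takes, is to show that every small $\IQ$-Gorenstein deformation of $X$ is equisingular: by Hacking--Prokhorov, any non-locally-trivial deformation of the Wahl singularity $\frac1{25}(1,14)$ leaves only canonical singularities, so a non-equisingular deformation of $X$ would produce a canonical (hence smoothable) I-surface, contradicting the non-smoothability just established. Equisingularity of all small deformations then makes $\kz$ open in $\overline\gothM_{1,3}$, and irreducibility plus the $28$-parameter count of Remark \ref{rem: moduli-n=5} finishes the proof.
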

\begin{proof} 
Assume by contradiction that $X$ is smoothable. Then  by Corollary  \ref{cor: smoothable-invo}, $X$ has an involution that lifts to an involution $\tau$ of $\tilde Y$ preserving the exceptional curves $A$, $B$ and $C$. In addition, $\tau$ maps to itself the exceptional curve $E$ of the second blow up of $Y$, since $E$ is the only irreducible $(-1)$-curve of $\tilde Y$. So $\tau$ maps $B$ to itself and  fixes the three distinct points  $B\cap A$, $B\cap C$ and $B
\cap E$. Since $B$ is a smooth rational curve, $\tau$ restricts to the identity on $B$ and therefore the induced involution $\bar \tau$ of $Y$ fixes the singular  fiber $F_1$ pointwise.  This is impossible since the divisorial part of the fixed locus of an involution on a smooth surface is a smooth curve.

To show that these surfaces give an open subset of an irreducible component of the moduli space it is enough to show that every small deformation of such a surface is equisingular, that is, also contains a T-singularity of the same type. 
Assume for contradiction that we have a non locally trivial deformation of such an $X$. Then, since every non-trivial  deformation of the singularity $\frac 1{25}(1,14)$
 has canonical singularities  \cite[Prop. 2.3]{Hacking-Prokhorov10}, $X$ would deform to a canonical surface and hence be smoothable --- a contradiction.
\end{proof}
\begin{rem}\label{rem: conjecture cuspidal}
 Clearly, the construction from Example \ref{ex: (c)} using a nodal fibre degenerates to the one constructed with a cuspidal fibre. Preliminary computations suggest that the latter surfaces might be smoothable.
 \end{rem}

\section{The classification: proof of Theorem \ref{thm: oneT} and Corollary \ref{cor: moduli}}
Throughout this section $X$ is a T-singular I-surface with a singularity of type $\frac{1}{dn^2}(1,dna-1)$; we use freely the notation of \S \ref{sec: stable-surf}.

\begin{lem}\label{lem: Y-ell}
The surface $Y$ is properly elliptic with $p_g(Y)=2$, $q(Y)=0$ and there are the following cases to consider:
\begin{center}
 \begin{tabular}{ccccc}
  \toprule
  $r-d$ & $n$ & $K_{\tilde Y}^2$ & T-singularity& T-string\\
  \midrule
  $0$ & $2$ & $0$ & $ \frac{1}{4d}(1,2d-1)$& $[4]$ or $[3,3]$ or $[3, 2\dots, 3]$\\
  $1$ & $3$ & $-1$ & $ \frac{1}{18}(1,5)$&  $[4,3,2]$\\
  $2$ & $5$ & $-2$ & $\frac{1}{25}(1, 14)$ & $[2,5,3]$\\
  \bottomrule
 \end{tabular}

\end{center}

\end{lem}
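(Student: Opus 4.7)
The plan has three ingredients: first fix the birational type of $Y$; then compute $K_{\tilde Y}^2$ from Lee's formula; finally classify the admissible T-strings by combining numerical and geometric constraints.

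Since T-singularities are rational, Remark \ref{rem: pg constant} yields $p_g(Y)=p_g(\tilde Y)=p_g(X)=2$ and $q(Y)=0$. Hence $Y$ is neither rational nor ruled, so $\kappa(Y)\ge 0$. The case $\kappa(Y)=0$ is excluded since it forces $p_g(Y)\le 1$; the case $\kappa(Y)=2$ is excluded because a minimal surface of general type has $K_Y^2\ge 1$, contradicting $K_Y^2<K_X^2=1$ from \eqref{eq: K2} (applicable because $Y$ is not rational). Hence $\kappa(Y)=1$, $Y$ is properly elliptic and $K_Y^2=0$. Lee's formula \eqref{eq: lee formula} then gives $K_{\tilde Y}^2=K_X^2-(r-d+1)=d-r$, and since $\eta\colon\tilde Y\to Y$ is a composition of $k:=r-d$ blowups at smooth points, $K_{\tilde Y}^2=-k$; so $r-d=k\ge 0$, and the third column of the table corresponds to $k\in\{0,1,2\}$.

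The classification proceeds by case analysis on $k$, combining two ingredients. \emph{Numerically}: ampleness of $K_X$ together with the assumption that the T-string is the only $f$-exceptional locus forces $(K_{\tilde Y}+\Delta)\cdot E>0$, equivalently $\Delta\cdot E>1$, for every $(-1)$-curve $E$ on $\tilde Y$ (where $\Delta$ is determined by the linear system $(K_{\tilde Y}+\Delta)\cdot A_i=0$). \emph{Geometrically}: on a minimal properly elliptic $Y$ with $\chi(\cO_Y)=3$ and $q=0$, the negative rational curves are tightly controlled via the canonical bundle formula $K_Y\sim F+\sum(m_i-1)F_i$, with fibre components being $(-2)$-curves, $(-3)$-curves being sections by Lemma \ref{lem: elliptic-3}, and multi-sections having predictable intersection with fibres. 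For $k=0$ one has $\tilde Y=Y$ and every index-$2$ T-string appears, giving the first row (realized by Example \ref{ex: n=2}). For $k=1$, the unique $(-1)$-curve $E$ must meet the T-string with $\Delta\cdot E>1$, and the geometric constraint then forces the blowup to lie at the singular point of an irreducible singular fibre with a $(-3)$-section as one T-string component; this selects the T-string $[4,3,2]$ of type $\tfrac{1}{18}(1,5)$ (realized by Example \ref{ex: (b)}). An analogous but longer analysis for $k=2$ leaves only $[2,5,3]$ of type $\tfrac{1}{25}(1,14)$ (realized by Example \ref{ex: (c)}), and $k\ge 3$ is excluded.

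The principal obstacle is the elimination step in the case analysis: several T-strings pass the purely numerical test and must be ruled out via the geometric input. Typical examples are $[5,2]$ of type $\tfrac{1}{9}(1,5)$ for $k=1$, or $[2,2,6]$ of type $\tfrac{1}{16}(1,3)$ for $k=2$. In each such case one would need the image $\bar A_i=\eta_*A_i$ on $Y$ to be a rational curve of a specific negative self-intersection (computed from the blowup profile) that is incompatible with the canonical bundle formula on a minimal properly elliptic surface with $\chi(\cO_Y)=3$ and $q=0$, typically because the resulting numerical type forces $\bar A_i$ to be either a smooth rational $(-1)$-curve (violating minimality of $Y$) or a curve whose intersection with a general fibre $F$ cannot be reconciled with $K_Y\cdot \bar A_i$.
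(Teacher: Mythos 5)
Your setup agrees with the paper: rationality of T-singularities gives $p_g(Y)=2$, $q(Y)=0$; the inequality \eqref{eq: K2} excludes $\kappa(Y)=2$ while $p_g=2$ excludes $\kappa(Y)\le 0$, so $Y$ is properly elliptic; and \eqref{eq: lee formula} gives $K_{\tilde Y}^2=d-r=-k$ with $k$ the number of blow-ups in $\eta$. Your treatment of $k=0$ (all index-$2$ strings survive) and of $k=1$ (a unique $(-1)$-curve $E$, the ampleness constraint $\Delta\cdot E>1$, exclusion of $[5,2]$ and of $[4,2,\dots,2,3,2]$ with $d>2$ via the canonical bundle formula on the minimal elliptic surface $Y$) is essentially the paper's argument in compressed form.

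The genuine gap lies in the two remaining numerical steps, which you assert rather than prove: that $k=r-d\le 2$, and that $k=2$ forces the singularity to be $\frac{1}{25}(1,14)$ with string $[2,5,3]$. These are precisely the points where the paper invokes external results: the bound $r-d\le 2$ is \cite[Theorem 1.1]{rana-urzua17}, and the $r-d=2$ classification is \cite[Theorem 3.2]{rana-urzua17}. Your sentence ``an analogous but longer analysis for $k=2$ leaves only $[2,5,3]$'' and the bare claim ``$k\ge 3$ is excluded'' conceal most of the actual content of the lemma: for $k=2$ one must eliminate, among others, the index-$4$ singularities $\frac{1}{16d}(1,4d-1)$ (e.g.\ the string $[6,2,2]$) together with all higher-$d$ iterates, and for each $k\ge 3$ there are infinitely many candidate T-strings (parametrized by $d$ and the choice of iterations). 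The mechanism you sketch --- computing the self-intersection of $\eta_*A_i$ on $Y$ and testing compatibility with the canonical bundle formula --- is the right kind of tool, but it is nowhere carried out, and it is not evident that it closes all cases without the more delicate arguments of Rana--Urz\'ua. Either cite those theorems explicitly or supply the full case analysis; as written, the proof is incomplete exactly where the lemma is hardest.
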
 
\begin{proof}
Since T-singularities are rational, $p_g(Y)=p_g(\tilde Y)=p_g(X)=2$ and $q(Y)=q(\tilde Y)=q(X)=0$ (Remark \ref{rem: pg constant}) and $K^2_{\tilde Y} =d-r$ by \eqref{eq: lee formula}. In addition we have $K^2_Y<K^2_X =1$ by 
\eqref{eq: K2}, hence $K^2_Y=0$ and $Y$ is properly elliptic.

By~\cite[Theorem 1.1]{rana-urzua17} we have $r-d\le 2$ and if $r-d=2$, then by~\cite[Theorem 3.2]{rana-urzua17}, the singularity must be of type $\frac{1}{25}(1,14)$, giving the third row of the table.

If $r-d=1$, then $n=3$, $K_{\tilde Y}^2=-1$ and the T-string is either
$[5,2]$ (the $\frac19(1,2)$ singularity)
or $[4,2,\ldots, 2, 3,2]$, where there are $d-2$ curves of self-intersection $(-2)$ between the $(-3)$-curve and $(-4)$-curve (the $\frac {1}{9d} (1,3d-1)$ singularities). 
We show that the former cannot occur, and the latter is possible only if $d=2$, i.e. the chain is $[4,3,2]$ and the singularity is $\frac {1}{18} (1,5)$.  

Notice that because $K_{\tilde Y}^2=-1$ and $Y$ is a minimal elliptic surface, the surface $\tilde{Y}$ contains exactly one $(-1)$-curve $E$ that we contract to obtain $Y$. By ampleness of $X$, the curve $E$ must intersect the T-string at least twice, and by nefness of $K_Y$ the curve $E$ cannot intersect a $(-2)$-curve. 

Let us suppose that the T-string is $[5,2]$. Denote by $A$ the $(-5)$-curve, and by abuse of notation its image in $Y$. Then as we have just argued, we have $EA\ge 2$. On the other hand, $EA\le 3$, because otherwise we would have $AK_Y<0$.  If $AE=3$, then the curve $A$ in $Y$ has a triple point. On the other hand, we have by adjunction that $AK_Y=0$, so $A$ must also be contained in a fiber of the elliptic fibration, a contradiction. If instead $AE=2$, then $A$ has a double point and by adjunction $AK_Y=1$. This means $A$ is a section of the fibration with a double point, which is impossible.

Now suppose that the T-string is $[4,2,\ldots, 2, 3,2]$. Then $E$ cannot intersect the T-string more than three times, since otherwise one of the curves in the T-string would become $K_Y$-negative. Denote by $A$ and $B$ the curves in the T-string of self-intersections $(-4)$ and $(-3)$, respectively, and by abuse of notation, their images in $Y$. Notice that $EA\le 2$ and $EB\le 1$ because otherwise $A$ or $B$ becomes $K_Y$-negative.

Then we have three possibilities:
\begin{enumerate}
\item[1)] $EA=EB=1$, or
\item[2)] $EA=2$, $EB=1$, or
\item[3)] $EA=2$, $EB=0$.
\end{enumerate}

In case 1), upon contracting $E$, we see that $A$ becomes a $(-3)$-curve on $Y$, so by adjunction we have $K_Y A=1$. On the other hand, the $(-2)$-curves in the T-chain and the curve $B$
 become parts of a fiber on $Y$, so $A$ also intersects an $I_k$ fiber twice for some $k\ge 1$. This forces $A$ to be a multisection. But the canonical bundle formula together with $p_g=2$ implies that this is impossible.

In cases 2) and 3), contracting $E$ gives us $A^2=0$, so that $A$ is a fiber of type  $I_1$ or $II$. In case 2), $B$ is a $(-2)$-curve passing through the singularity of $A$, which is impossible.
 In case 3)  if $r\ge 4$  the curve $A$ intersects a $(-2)$ curve, which is not possible. 

Finally, for $r-d=0$ one has $n=2$ and we have listed all the possibilities.
\end{proof} 
\begin{rem}\label{rem: -3curve n = 3}
 Note that we have shown in the course of the proof that in the case $n=3$ the minimal elliptic surface $Y$ contains a $(-3)$-curve. 
\end{rem}

\begin{lem} \label{lem: Y=Y}
If $X$ has index $n=2$, then:
\begin{enumerate} 
\item the surface $\tilde Y=Y$ is minimal;
\item the canonical system $|K_Y|$ is equal to the  pencil $|F|$, where $F$ is an elliptic fibre. 
\end{enumerate}
\end{lem}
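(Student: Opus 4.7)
Part (i) is immediate from Lemma \ref{lem: Y-ell}: in the row $n=2$ we have $K_{\tilde Y}^2=K_X^2-(r-d+1)=0$, and since $Y$ is minimal properly elliptic we also have $K_Y^2=0$. As $\tilde Y\to Y$ is a sequence of blow-downs, each of which strictly increases $K^2$, we must have $\tilde Y=Y$.

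For part (ii), the plan is to apply the canonical bundle formula to the elliptic fibration $Y\to \IP^1$ with $\chi(\ko_Y)=3$ and $q(Y)=0$. This gives
\[
K_Y\sim F+D,\qquad D:=\sum_i(m_i-1)F_i,
\]
where the $m_iF_i$ are the multiple fibres. Since $h^0(K_Y)=p_g(Y)=2$ and $h^0(F)=2$, both linear systems are pencils, and the inclusion $|F|+D\subseteq|K_Y|$ is an equality of pencils. Thus $|K_Y|=|F|$ if and only if $D=0$. Numerically $K_Y\equiv\alpha F$ with $\alpha=1+\sum(m_i-1)/m_i\ge 1$, with equality exactly when $D=0$; so the task reduces to showing $\alpha=1$.

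I would then pair this numerical identity with intersection numbers against an extremal component $A_1$ of the T-string $C$, using that $K_Y\cdot A_1=-2-A_1^2$ by adjunction and that $A_1$ is horizontal as soon as $K_Y\cdot A_1>0$ (on a minimal properly elliptic surface, $K_Y\cdot A_1=0$ is equivalent to $A_1$ being a fibre component). For the T-strings $[3,3]$ and $[3,2,\dots,2,3]$ one has $K_Y\cdot A_1=1$, so $\alpha(F\cdot A_1)=1$ with $\alpha\ge 1$ and $F\cdot A_1\ge 1$ forces $\alpha=1$ (and $A_1$ is a section). For $[4]$ one has $K_Y\cdot A_1=2$, and here the option $\alpha=2$, $F\cdot A_1=1$ is not excluded by numerics alone; this is the main obstacle.

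To rule out that option, I would observe that $\alpha=2$ would require $\sum(m_i-1)/m_i=1$, and among integers $m_i\ge 2$ this forces exactly two double fibres $2F_{01}$, $2F_{02}$. But then $F_{0j}\equiv \frac12 F$, so $A_1\cdot F_{0j}=\frac12(A_1\cdot F)=\frac12$, contradicting the integrality of the intersection of two distinct effective divisors. Hence $\alpha=1$ in every case, $D=0$, and $K_Y\sim F$, proving $|K_Y|=|F|$.
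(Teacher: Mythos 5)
Your proof is correct and follows essentially the same route as the paper: part (i) is read off from Lemma \ref{lem: Y-ell}, and part (ii) uses the canonical bundle formula paired against the intersection of $K_Y$ with an end component of the T-string. You are in fact slightly more careful than the paper's one-line conclusion (``we conclude that there are no multiple fibers''), since for the string $[4]$ the numerics alone permit $\alpha=2$ with two double fibres, and you correctly eliminate this via the non-integral intersection $A_1\cdot F_{0j}=\tfrac12$.
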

\begin{proof}
(i) This was part of Lemma \ref{lem: Y-ell}. 
\medskip

(ii) 
Since $p_g(Y)=2$, the canonical bundle formula for elliptic surfaces gives  $|K_Y|=|F| +\sum (m_i-1)F_i$, where  $m_1F_1, 
\dots m_kF_k$ are  the multiple fibers. 
The exceptional divisor of the desingularization map $Y\to X$ contains a $(-n)$-curve $B$ for $n=3$ or $4$. So we have $1\le K_YB=B(F +\sum (m_i-1)F_i)\le n-2\le 2$, and we conclude that   there are no multiple fibers and  $|K_Y|=|F|$.
\end{proof}
\begin{prop} \label{prop: index 2 classification}
If $X$ has index $n=2$, then it is obtained as in Example \ref{ex: n=2}. 
\end{prop}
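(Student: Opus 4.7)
\noindent\textit{Plan of proof.} The strategy is to show that the bicanonical map realises $X$ as a double cover of the quadric cone $\kq_2\subset\pp^3$, then lift this cover to $\F_2$ and identify its branch divisor with the one constructed in Example \ref{ex: n=2}. I would first set up the bicanonical system on $\tilde Y$. Because $n=2$, the line bundle $2K_X$ is Cartier, and since every exceptional component of the T-string $C$ has log discrepancy $-\tfrac12$, the correction term in \eqref{eq: plurigenus-lt} vanishes at $m=2$, giving $h^0(X,2K_X)=4$. Set $L:=f^*(2K_X)=2K_{\tilde Y}+C$. Using $K_{\tilde Y}\cdot C=2$, $C^2=-4$, and Lemma \ref{lem: Y=Y} one checks that $L^2=4$, $L\cdot F=C\cdot F=2$ for any elliptic fibre $F\in|K_{\tilde Y}|$, and $L\cdot C=0$; moreover $L|_C\cong \OO_C$, because $L$ is pulled back from $X$ and $f$ contracts $C$ to a point.

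Next I would identify the bicanonical image. After showing $|2K_X|$ is base-point free (by vanishing arguments analogous to the Gorenstein case treated in \cite{FPR17}), $\phi_L\colon \tilde Y\to \pp^3$ is a morphism. On a general fibre $F$ the restriction $L|_F$ has degree $2$ on an elliptic curve, so it defines a $g^1_2$ and $\phi_L(F)$ is a line in $\pp^3$ covered $2$-to-$1$. As $F$ varies in the pencil $|K_{\tilde Y}|$, the image lines sweep out a ruled surface $\Sigma\subset\pp^3$ with $\deg\phi_L\cdot\deg\Sigma=L^2=4$. Since $L\cdot C=0$ and $L|_C$ is trivial, $\phi_L$ contracts $C$ to a single point $v\in\pp^3$, and since $C\cdot F=2>0$ every image line $\phi_L(F)$ passes through $v$; hence $\Sigma$ is a quadric cone with vertex $v$, so $\Sigma=\kq_2$ and $\phi_L$ has degree $2$.

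Finally I would recover the description of Example \ref{ex: n=2}. Via the Stein factorisation $\tilde Y\overset{\nu}{\to}\bar Y\overset{\bar\pi}{\to}\kq_2$ and the minimal resolution $\epsilon\colon \F_2\to\kq_2$, pulling $\bar\pi$ back to $\F_2$ and normalising yields a flat finite double cover $\pi\colon Y'\to \F_2$. The standard double-cover formulas, together with $p_g(X)=2$ and $K_X^2=1$, force the branch divisor $D$ to lie in $|4\sigma_0+2\Gamma|$ and not to contain $\sigma\inff$. The preimage of $\sigma\inff$ on $\tilde Y$ is precisely the T-string $C$, and the three cases $[4]$, $[3,3]$, and $[3,2,\dots,2,3]$ of Lemma \ref{lem: Y-ell} arise respectively when $D\cap\sigma\inff$ consists of two reduced points (case (1) with $d=1$), a single smooth point of tangency (case (2) with $d=2$), or an $A_{d-2}$ double point (case (3) with $d\ge 3$). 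The main technical obstacle is expected to be ruling out that $\Sigma$ is a smooth quadric: this relies on the observation that the contracted curve $C$ meets every fibre, so all rulings of $\Sigma$ must pass through the single point $v$; once $\Sigma=\kq_2$ is in hand, the remainder reduces to standard bookkeeping about double covers.
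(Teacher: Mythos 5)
Your plan is correct in outline, but it takes a genuinely different route from the paper. You run the argument through the bicanonical system: $|2K_X|$ pulls back to $|2F+C|$ on $\tilde Y$, maps $2$-to-$1$ onto the quadric cone $\kq_2\subset\pp^3$, and you then lift to the resolution $\F_2\to\kq_2$ and identify the branch divisor. The paper instead works with the round-down of the tricanonical system, $L=3F+\Delta$ with $h^0(L)=6$, so that the image is a degree-$4$ surface in $\pp^5$; this is a surface of \emph{minimal} degree, hence by classification either a smooth scroll or the cone $\kq_4$, and the line $\phi(\Delta)$ meeting every ruling at a single point forces the image to be $\F_2$ itself. The paper's choice buys a direct landing on $\F_2$ (no vertex to resolve, no lifting step) at the cost of a longer fixed-component analysis; your choice stays closer to the intrinsic bicanonical description (the one recorded a posteriori in Remark \ref{rem: bic-n=2}) but has to resolve the cone and rule out degenerate images (plane, quartic cone with birational $\phi_L$ --- the latter is excluded since $\tilde Y$ is not rational). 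Your numerical bookkeeping ($h^0(2K_X)=4$, $L^2=4$, $L\cdot F=2$, $L\cdot C=0$, $L|_C\cong\OO_C$) is all correct.

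The one step you cannot leave to analogy is base-point freeness of $|2K_X|$: the cited result of \cite{FPR17} is for Gorenstein $X$, and here $X$ has index $2$, so this is exactly the load-bearing technical point (it occupies most of the paper's proof in its version for $|3F+\Delta|$). Fortunately, for $|2F+C|$ it has a short argument that you should supply: the kernel of the restriction $H^0(2F+C)\to H^0(\OO_C(2F+C))=H^0(\OO_C)$ is $H^0(2F)$, which is $3$-dimensional, so the image is the full $1$-dimensional space of constant sections of $\OO_C$; a nonzero constant section is nowhere vanishing, so $|2F+C|$ has no base points on $C$, and away from $C$ it contains the free system $|2F|+C$. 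With that in place, the rest of your plan goes through.
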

\begin{proof} 
Denote by $\Delta$ the exceptional divisor of $Y\to X$; recall that $\Delta$   is a string of type $[4]$, $[3,3]$, or $[3,2\dots,3]$ 
(with $2$ occurring $k$ times), according to whether $d=1,2$ or $d=k+2>2$.   Since $(K_Y+\frac 12 \Delta) \Delta =0$  and  $K_Y=F$ (cf. Lemma \ref{lem: Y=Y}), we have $F\Delta=2$. 

Set $L:=3F+\Delta$; one has $L^2=8$, $LK_Y=2$ and so $\chi(L)=6$. We can write $L=K_Y+(2F+\Delta)$, and $2F+\Delta$ is nef and big since it is the pull-back of $2K_X$, so 
 by Kawamata-Viehweg vanishing  $h^0(L)=\chi(L)=6$.
 Restricting to $\Delta$ and taking cohomology we see that the image of the  map $H^0(3F+\Delta)\to H^0(\OO_{\Delta}(2))$ has dimension 2. 
 We are going to  use this fact to show that  $|L|$ has no fixed components. Note that any fixed component of $L$ must be a component of $\Delta$, since $|3F|$ is free. 
  If $\Delta$ is irreducible, then it is not a fixed component since the map $H^0(L)\to H^0(L|_{\Delta})$ is non-zero. 
If $\Delta$ is reducible, denote by $A_1$ and $A_2$ the $(-3)$-curves of $\Delta$ and suppose  that  $A_1$  is in the base locus of $|L|$: then all the $(-2)$-curves of $\Delta$ are also in the base locus of $|L|$, 
since $L\Gamma=0$ for a $(-2)$-curve $\Gamma$ of $\Delta$, and $|L|$ has a base point on $A_2$, so $|L|_{\Delta}$ has dimension $\le 0$, a contradiction. 
 Assume now that $d>2$ and  a $(-2)$-curve of  $\Delta$ is a fixed component of $|L|$: then, as above,  all the $(-2)$-curves of $\Delta$ are fixed components of $|L|$ and
  $h^0(3F+A_1+A_2)=h^0(L)=6$.  Set $M=3F+A_1+A_2$: we have $\chi(M)=5$, so $h^1(M)=1$. On the other hand, $M=K_Y+2F+A_1+A_2$ and we can  write
   $2F+A_1+A_2=(2F+\frac 23 A_1+\frac 23 A_2)+\frac 1 3A_1+\frac 13 A_2$. Since $2F+\frac 23 A_1+\frac 23 A_2$ is nef and big,  we have    $h^1(M)=0$ by Kawamata-Viehweg's vanishing, 
   a contradiction. We conclude  that $|L|$ has no fixed component.
\newline
Now one can argue precisely as in the proof of Lemma \ref{lem: elliptic-3} and prove the following:
\begin{itemize}
\item $|L|$ is base point free and defines a 2-to-1 map $\fie\colon Y\to \pp^5$, so the image $\Sigma$ of $\fie$ is either a smooth rational scroll of degree $4$ or a cone over the rational normal curve of degree 4;
\item $\fie$ maps the elliptic fibers  2-to-1 to  rulings of $\Sigma$ and  maps $\Delta$ to a line meeting all the rulings, so $\Sigma$ is isomorphic to $\F_2$; 
\item the curves contracted by $\fie$ are exactly the $(-2)$-curves contained in $\Delta$ (if any). 
\end{itemize}
So $X$ is obtained as in Example \ref{ex: n=2}. 
\end{proof}

\begin{proof}[Proof of Theorem \ref{thm: oneT}]
We have restricted the number of possible singularities in Lemma \ref{lem: Y-ell}. The fact that the cases of index $n=2$ are constructed as in Example \ref{ex: n=2} is proved in Proposition \ref{prop: index 2 classification} which gives the bound $d\leq 32$ from Proposition \ref{prop: D}. 

It remains to show that if $X$ has a singularity of type $\frac{1}{18}(1,5)$, then $X$ is constructed as in Example \ref{ex: (b)}, and if $X$ has a singularity of type $\frac{1}{25}(1,14)$, then $X$ is constructed as in Example \ref{ex: (c)}. As explained in  Lemma \ref{lem: Y-ell} the corresponding T-strings in $\tilde Y$ contain a $(-3)$-curve and we claim that this maps indeed to a $(-3)$-curve $B$ in $Y$. In the former case, this follows from  Remark \ref{rem: -3curve n = 3}, while in the latter case  this 
is~\cite[Theorem 3.2 (A1)]{rana-urzua17}.
We know explicitly the possible such  pairs $(Y,B)$  by Lemma \ref{lem: elliptic-3} and thus 
the T-singularities arise as in the Examples  \ref{ex: (b)} and \ref{ex: (c)}. 
%
%
\end{proof}
\begin{proof}[Proof of Corollary  \ref{cor: moduli}]
Recall that the main component of the moduli space of I-surfaces is irreducible of dimension $28$.
We have shown in Remark \ref{rem: bic-n=2} that every T-singular surface of index 2 is smoothable and in Proposition \ref{prop: D} that those of type $\frac 14(1,1)$ depend on $27$ parameters, hence give a divisor in the main component. For type $\frac 1{18}(1,5)$ we argue similarly using  Corollary \ref{cor: index 3 smoothable} and Remark \ref{rem: moduli-n=3}.
The case of type $\frac{1}{25}(1,14)$ was treated in Proposition \ref{prop: 5 I_1 not smoothable}.
\end{proof}

  \def\cprime{$'$}

 \end{document}